 \newtheorem{theorem}{Theorem}[subsection]
 \newtheorem{corollary}[theorem]{Corollary}
 \theoremstyle{definition}
 \newtheorem{definition}[theorem]{Definition}
 \theoremstyle{remark}
 \numberwithin{equation}{subsection}
\begin{document}

\title{Reidemeister-Franz torsion of Compact Orientable Surfaces via Pants Decomposition}
\author{ESMA D\.IR\.ICAN ERDAL and YA\c{S}AR S\"OZEN}

\address[1]{%
Deptartment of Mathematics\\
\.{I}zmir Institute of Technology\\
 35430, \.{I}zmir\\
Turkey}
\email{esmadirican@iyte.edu.tr}

\address[2]{%
Deptartment of Mathematics\\
Hacettepe University\\
06800, Ankara\\
Turkey}
\email{ysozen@hacettepe.edu.tr}

\thanks{Partially supported by T\"{U}B\.{I}TAK under the
project number 114F516. The first author would also like to thank T\"{U}B\.{I}TAK for the financial support. }

\subjclass[2010]{Primary 55U99; Secondary (optional) 18G99, 57Q10}
\keywords{Reidemeister-Franz torsion, compact orientable surfaces, pair of pants, period matrix}

\begin{abstract}
	Let $\Sigma_{g,n}$ denote the compact orientable surface with genus $g\geq 2$ and boundary disjoint union of $n$ circles. By using a particular pants-decomposition of $\Sigma_{g,n},$ we obtain a formula that computes the Reidemeister-Franz torsion of $\Sigma_{g,n}$ in terms of Reidemeister torsions of pairs of pants. 
\end{abstract}	
\maketitle

\section{Introduction}
The Reidemeister-Franz torsion (or R-torsion) was introduced by Reidemeister to classify $3$ dimensional lens spaces \cite{reidemeister}. This invariant was later generalized by Franz to other dimensions \cite{franz} and shown to be a topological invariant by Kirby-Siebenmann \cite{Kirby}. The R-torsion is also an invariant of the basis of the homology of a manifold \cite{milnor}. Moreover, for compact orientable Riemannian manifolds the R-torsion is equal to the analytic torsion \cite{Cheeger}.\\

Using the combinatorial definition of the Reidemeister torsion, Witten computed the volume of the moduli 
 space $\mathcal{M}$ of gauge equivalence classes of flat connections on a compact Riemann surface \cite{witten}. The combinatorial torsion is equivalent to the Ray-Singer analytic torsion \cite{Cheeger}. In the quantum field theory, one important ingredient was the ability to compute by decomposing a surface into elementary pieces. The pair of pants is a $(1+1)$-dimensional bordism, which corresponds to a product or 
 co-product (depending on its orientation) in a $2$-dimensional TQFT. Witten established a formula to compute the Ray-Singer analytic torsion of a pair of pants by using its cell decomposition. He also gave a cutting formula for orientable closed surface $\Sigma_{g,0}$ by decomposing an orientable surface $\Sigma_{g,0}$ of genus $g$ into $2g-2$ pairs of pants. \\
 
The present paper provides a formula to compute the Reidemeister-Franz torsion of a pair of pants in terms of the determinant of the period matrix of the Poincar\'{e} dual basis of $H^1(\Sigma_{2,0}).$ Then it expresses the  Reidemeister-Franz torsion of orientable compact surface $\Sigma_{g,n}$ as the product of the  Reidemeister-Franz torsions of pairs of pants.\\
 
For a manifold $M$ and an  integer $\eta$, we denote by 
$\mathbf{h}^{M}_\eta$ the basis of the homology $H_\eta(M)=H_\eta(M; \mathbb{R}).$ Note that $\Sigma_{2,0}$ is the double of $\Sigma_{0,3}.$ Let $\Delta_{0,2}(\Sigma_{2,0})$ be the matrix of the intersection pairing of $\Sigma_{2,0}$ in the bases
$\mathbf{h}^{\Sigma_{2,0}}_0,$ $\mathbf{h}^{\Sigma_{2,0}}_{2},$ 
and $\mathbf{h}_{\Sigma_{2,0}}^1=\{\omega_j\}_{1}^{4}$ denote the
Poincar\'{e} dual basis of $H^1(\Sigma_{2,0})$ corresponding to $\mathbf{h}^{\Sigma_{2,0}}_1$. We first prove the following theorem for the R-torsion of the pair of pants $\Sigma_{0,3}.$ 
\begin{theorem}\label{thm1}
	For a given basis $\mathbf{h}^{\Sigma_{0,3}}_i,$ $i=0,1,$ there is a basis
	$\mathbf{h}^{\Sigma_{2,0}}_\eta,$ $\eta=0,1,2$ such that the following formula holds
		\[|\mathbb{T}(\Sigma_{0,3},\{\mathbf{h}_i^{\Sigma_{0,3}}\}_{0}^1)|= \sqrt{\left|\frac{\det \Delta_{0,2}(\Sigma_{2,0})
			}{\det\wp(\mathbf{h}_{\Sigma_{2,0}}^1,\Gamma)}\right|},\]
where $\Gamma=\{\Gamma_1,\Gamma_2,\Gamma_{3},\Gamma_{4}\}$ is the canonical
basis for $H_1(\Sigma_{2,0}),$ i.e. $i=1,2,$ $\Gamma_i$ intersects
$\Gamma_{i+2}$ once positively and does not intersect others, and
$\wp(\mathbf{h}_{\Sigma_{2,0}}^1,\Gamma)= [\int_{\Gamma_i}\omega_j]$ is
the period matrix of $\mathbf{h}_{\Sigma_{2,0}}^1$ with respect to the basis
$\Gamma.$ 
\end{theorem}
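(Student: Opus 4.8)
The plan is to use the standard formula expressing the Reidemeister–Franz torsion of a surface in terms of the intersection pairing and the period matrix, and then to exploit the doubling relation $\Sigma_{2,0}=\mathrm{double}(\Sigma_{0,3})$ together with a Mayer–Vietoris argument to transfer the torsion computation from the pair of pants to the closed genus–$2$ surface. Recall that for a compact orientable surface $\Sigma$ with a chosen basis of homology, the torsion can be computed from the acyclic (or, in the non-acyclic case, relative) chain complex with coefficients in $\mathbb{R}$; since $\Sigma_{0,3}$ is homotopy equivalent to a wedge of two circles, its homology is concentrated in degrees $0$ and $1$, and the torsion is a ratio of determinants of the change-of-basis matrices between the geometric cellular bases and the prescribed homology bases $\mathbf{h}^{\Sigma_{0,3}}_0,\mathbf{h}^{\Sigma_{0,3}}_1$.

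First I would set up the cell structure on $\Sigma_{2,0}$ compatibly with the pants decomposition $\Sigma_{2,0}=\Sigma_{0,3}\cup_\partial \Sigma_{0,3}'$, so that the three boundary curves along which the gluing is performed are subcomplexes. Next I would write down the Mayer–Vietoris sequence for this decomposition and observe that it lets one express $\mathbb{T}(\Sigma_{2,0})$ as a product of $\mathbb{T}(\Sigma_{0,3})$, $\mathbb{T}(\Sigma_{0,3}')$, and $\mathbb{T}$ of the three gluing circles (each circle contributing a factor that, with a suitable choice of bases, equals $1$), up to the determinant of the Mayer–Vietoris "mismatch" map on homology — this is where the intersection matrix $\Delta_{0,2}(\Sigma_{2,0})$ enters. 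Since the two copies of the pair of pants are homeomorphic and we may choose mirror-image bases, their torsions coincide in absolute value, giving $|\mathbb{T}(\Sigma_{2,0})| = |\mathbb{T}(\Sigma_{0,3},\{\mathbf{h}_i^{\Sigma_{0,3}}\})|^2 \cdot (\text{correction})$. Then I would invoke the classical fact (going back to the de Rham / Hodge-theoretic description used by Witten and Fried) that for the closed surface $\Sigma_{2,0}$, with the Poincaré dual basis $\mathbf{h}^1_{\Sigma_{2,0}}=\{\omega_j\}$ and canonical symplectic basis $\Gamma$ of $H_1$, one has $|\mathbb{T}(\Sigma_{2,0})| = \sqrt{|\det\Delta_{0,2}(\Sigma_{2,0})/\det\wp(\mathbf{h}^1_{\Sigma_{2,0}},\Gamma)|}$, i.e. the torsion of the closed genus-$2$ surface is itself governed by the period matrix. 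Combining these two relations and taking the square root yields the claimed formula; the bookkeeping of the correction factors must be arranged (by the freedom in choosing $\mathbf{h}^{\Sigma_{2,0}}_\eta$, exactly as the statement allows) so that everything except $\det\Delta_{0,2}$ and $\det\wp$ cancels.

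The main obstacle I anticipate is the careful matching of the homology bases: the statement only asserts existence of a suitable $\mathbf{h}^{\Sigma_{2,0}}_\eta$, so I must track precisely how a given pair $(\mathbf{h}^{\Sigma_{0,3}}_0,\mathbf{h}^{\Sigma_{0,3}}_1)$ propagates through the Mayer–Vietoris sequence to determine bases on $\Sigma_{2,0}$, and verify that with this induced choice the torsion multiplicativity formula has no leftover determinant factors beyond those displayed. A secondary technical point is handling the non-acyclicity of the chain complexes (the surface with boundary has $H_1\neq 0$), which forces the use of the torsion-of-a-based-chain-complex formalism and the fusion lemma for short exact sequences of complexes; the sign/absolute-value issues are benign since the statement is only up to sign, but the determinant of the connecting homomorphism — which is essentially $\det\Delta_{0,2}(\Sigma_{2,0})$ — is the quantity that needs to be identified correctly, and getting that identification right is the crux.
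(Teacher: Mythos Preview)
Your high-level strategy --- view $\Sigma_{2,0}$ as the double of $\Sigma_{0,3}$, apply Mayer--Vietoris multiplicativity for torsion to the short exact sequence
\[
0\to C_\ast(\mathcal B)\to C_\ast(\Sigma_{0,3})\oplus C_\ast(\Sigma_{0,3})\to C_\ast(\Sigma_{2,0})\to 0,
\]
and then feed in a known formula for $|\mathbb{T}(\Sigma_{2,0})|$ --- is exactly the paper's approach. But two points in your plan are off, and they concern precisely the ``crux'' you flagged.

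First, you locate $\det\Delta_{0,2}(\Sigma_{2,0})$ in the wrong place. You say it arises as the determinant of the Mayer--Vietoris connecting homomorphism (the ``mismatch'' correction). In the paper's argument it does not appear there at all: the freedom to choose the bases $\mathbf{h}^{\Sigma_{2,0}}_\eta$, $\eta=0,1,2$, is spent entirely on forcing the torsion of the long exact sequence $\mathcal H_\ast$ to equal $1$. Concretely, one walks through $C_p(\mathcal H_\ast)$ for $p=0,\dots,6$, at each step adjusting either the basis of $\mathrm{Im}$ (by a scalar $(\det)^{-1}$ factor) or declaring the induced basis to \emph{be} $\mathbf{h}^{\Sigma_{2,0}}_\eta$, so that every change-of-basis determinant is $1$. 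With $\mathbb{T}(\mathcal H_\ast)=1$ and $|\mathbb{T}(\mathbb S_j)|=1$ for each boundary circle, Milnor's multiplicativity gives the clean relation
\[
|\mathbb{T}(\Sigma_{0,3},\{\mathbf h_i^{\Sigma_{0,3}}\})|^2=|\mathbb{T}(\Sigma_{2,0},\{\mathbf h_\eta^{\Sigma_{2,0}}\})|,
\]
with no leftover connecting-map determinant.

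Second, and relatedly, your quoted formula for the closed surface has a spurious square root. The result you need (Poincar\'e duality combined with the period-matrix computation, \cite[Thm.~4.1]{sozen2} in the paper's references) is
\[
|\mathbb{T}(\Sigma_{2,0},\{\mathbf h_\eta^{\Sigma_{2,0}}\})|
=\left|\frac{\det\Delta_{0,2}(\Sigma_{2,0})}{\det\wp(\mathbf h^1_{\Sigma_{2,0}},\Gamma)}\right|,
\]
not its square root. Both $\det\Delta_{0,2}$ and $\det\wp$ enter here, from the torsion of $\Sigma_{2,0}$ itself, not from the Mayer--Vietoris bookkeeping. Plugging this into the displayed relation above and taking a single square root gives the theorem. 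As written, your two displayed identities would combine to produce a fourth root rather than a square root, so the argument would not close; the fix is to drop the connecting-map interpretation of $\Delta_{0,2}$ and use the correct (unrooted) closed-surface formula.
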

 By using the pants decomposition of $\Sigma_{g,n}$ as in Figure 1, we prove the following theorem.
	\begin{theorem}
	\label{thm2}	
	If $\mathbf{h}_\eta^{\Sigma_{g,n}}$ is a given basis, $\eta=0,1,$ then for each $\nu=1,\ldots,2g-2+n$ there exists a basis $\mathbf{h}_\eta^{{\Sigma_{0,3}^{\nu}}}$ such that 
	\begin{eqnarray*}
		|\mathbb{T}(\Sigma_{g,n},\{\mathbf{h}_\eta^{\Sigma_{g,n}}\}_{0}^1)|=\prod_{\nu=1}^{2g-2+n}
		|\mathbb{T}({\Sigma_{0,3}^{\nu}},\{\mathbf{h}_\eta^{{\Sigma_{0,3}^{\nu}}}\}_{0}^1)|,
	\end{eqnarray*}
	where $\Sigma_{0,3}^{\nu}$ is the pair of pants in the decomposition labelled by $\nu.$
\end{theorem}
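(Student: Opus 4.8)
The plan is to reduce the global statement to an iterated application of a Mayer--Vietoris gluing formula for Reidemeister--Franz torsion, using the pants decomposition of $\Sigma_{g,n}$ shown in Figure 1 as a sequence of cut-and-paste operations, and then to invoke Theorem \ref{thm1} on each elementary piece. First I would recall the multiplicativity (gluing) property of R-torsion: if a CW-complex $X$ is written as $X_1\cup_{Y} X_2$ with $Y=X_1\cap X_2$, then there is a Mayer--Vietoris long exact sequence $\mathcal{H}$ in real homology, and the torsions satisfy
\[
\mathbb{T}(X,\mathbf{h}^{X})\cdot\mathbb{T}(\mathcal{H})=\pm\,\mathbb{T}(X_1,\mathbf{h}^{X_1})\cdot\mathbb{T}(X_2,\mathbf{h}^{X_2})\cdot\mathbb{T}(Y,\mathbf{h}^{Y}),
\]
for compatible choices of homology bases. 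The strategy is to apply this formula $2g-3+n$ times, peeling off one pair of pants $\Sigma_{0,3}^{\nu}$ at a time along a single boundary circle (a copy of $S^1$), so that at each stage $Y$ is a disjoint union of circles. The key arithmetic point is that $|\mathbb{T}(S^1,\mathbf{h}^{S^1})|=1$ for the natural bases of $H_0(S^1)$ and $H_1(S^1)$ (each being one-dimensional with the obvious generators), so the circle factors contribute nothing in absolute value.

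The order of steps would be: (1) fix the pants decomposition $\Sigma_{g,n}=\bigcup_{\nu=1}^{2g-2+n}\Sigma_{0,3}^{\nu}$ and enumerate the gluing circles $c_1,\dots,c_{3g-3+n}$; (2) state and justify the gluing formula above together with the normalization $|\mathbb{T}(S^1,\cdot)|=1$ and the fact that a finite disjoint union has torsion equal to the product of the torsions of its components; (3) choose, inductively, the homology bases $\mathbf{h}_\eta^{\Sigma_{0,3}^{\nu}}$ on the pieces and auxiliary bases on the intermediate unions so that the Mayer--Vietoris sequences $\mathcal{H}$ appearing at every stage are based compatibly and have $|\mathbb{T}(\mathcal{H})|=1$; (4) carry out the induction on the number of pieces, at each step writing the current surface as (surface already built) $\cup_{S^1}$ (next pair of pants), so that in absolute value the torsion multiplies by $|\mathbb{T}(\Sigma_{0,3}^{\nu},\mathbf{h}_\eta^{\Sigma_{0,3}^{\nu}})|$; (5) collect the factors to obtain the displayed product formula. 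Theorem \ref{thm1} is then what identifies each factor $|\mathbb{T}(\Sigma_{0,3}^{\nu},\cdot)|$ with the period-matrix expression, though for Theorem 2 as stated we only need that such bases exist.

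The main obstacle I anticipate is step (3): controlling the Mayer--Vietoris correction term $\mathbb{T}(\mathcal{H})$ so that it is $\pm 1$ in absolute value at every stage. This requires a careful bookkeeping of the connecting homomorphisms in the sequence
\[
\cdots\to H_1(S^1)\to H_1(\text{built part})\oplus H_1(\Sigma_{0,3}^{\nu})\to H_1(\text{new part})\to H_0(S^1)\to\cdots,
\]
and a judicious choice of bases making the nonzero blocks of these maps unimodular. The natural device is to work throughout with bases adapted to the intersection form (as in the setup preceding Theorem \ref{thm1}, where $\Delta_{0,2}$ records the intersection pairing) and to the boundary/gluing circles, and then to quote the standard fact that for such geometric bases the Mayer--Vietoris torsion of a gluing along circles is $\pm 1$; this is exactly the mechanism behind Witten's cutting formula for $\Sigma_{g,0}$. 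A secondary, purely technical point is checking that the inductively constructed surface is always itself (homotopy equivalent to) a surface with boundary to which the gluing formula applies, which follows since gluing two pieces of a pants decomposition along one boundary circle yields again a compact surface with boundary.
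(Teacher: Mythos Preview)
Your overall strategy coincides with the paper's: apply Milnor's multiplicativity formula for the Mayer--Vietoris sequence repeatedly, use $|\mathbb{T}(S^1)|=1$, and at each stage choose bases on the new piece so that the correction term satisfies $|\mathbb{T}(\mathcal{H})|=1$ (this is exactly the ``arguments in the proof of Theorem~\ref{thm1}'' that the paper reuses). However, your inductive scheme as written --- ``(surface already built) $\cup_{S^1}$ (next pair of pants)'' along a \emph{single} circle --- can only assemble genus--zero surfaces. In the decomposition of Figure~1 each of the $g$ handle pieces is a pair of pants with two of its three boundary circles identified with one another; such a piece is attached to the rest of the surface along one circle \emph{as a one--holed torus} $\Sigma_{1,1}$, not as a pair of pants. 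The paper deals with this in two steps: first peel off $\Sigma_{1,1}$ along a single circle (its Cases~3 and~4), and then write $\Sigma_{1,1}=Y\cup_{\partial Y}\Sigma_{0,3}$ with $Y$ a thin cylinder about the non-separating curve (its Case~2). In that last step the intersection $\partial Y$ consists of \emph{two} circles, and one needs the additional input $|\mathbb{T}(Y)|=1$ from Corollary~\ref{cor2}. Without this cylinder trick (or some other treatment of the self-gluing), your induction stalls at the $\Sigma_{1,1}$ pieces and never reaches a product over pairs of pants.

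Two minor remarks. In your gluing identity the factor $\mathbb{T}(Y)$ should carry exponent $-1$ (compare the paper's use of \cite[Thm.~3.2]{milnor}), though this is immaterial once you pass to absolute values and use $|\mathbb{T}(S^1)|=1$. Also, Theorem~\ref{thm1} itself is not used in the paper's proof of Theorem~\ref{thm2}; only the basis-selection technique from its proof is recycled, which you correctly flag as the crux in your step~(3).
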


\section{R-torsion of a general chain complex}
Let 
 $C_{\ast}:0\to C_n\overset{\partial_n}{\rightarrow} %
	C_{n-1}\rightarrow\cdots\rightarrow C_1 \overset{%
		\partial_1}{\rightarrow} C_0\rightarrow 0$
	be a chain complex of 
	finite dimensional vector spaces over $\mathbb{R}.$ Let $B_p(C_{\ast})=\mathrm{Im}\partial_{p+1},$ 
	$Z_p(C_{\ast})=\mathrm{Ker}\partial_{p},$ and $H_p(C_{\ast})=Z_p(C_{\ast})/B_p(C_{\ast})$ denote the 
	$p$-th homology of the
	chain complex $C_{\ast}$ for $p=0,\ldots,n.$ 
	Then we have the following short exact sequences 
	\begin{equation}\label{eq3}
	0\to Z_p(C_\ast) \overset{\imath}{\rightarrow} C_p(C_\ast)
	\overset{\partial_p}{\rightarrow} B_{p-1}(C_\ast) \to 0,
	\end{equation}
	\begin{equation}\label{eq4}
	0\to B_p(C_\ast) \overset{\imath}{\rightarrow} Z_p(C_\ast)
	\overset{\varphi_p}{\to} H_p(C_\ast) \to 0.
	\end{equation}
	Here, $\imath$ and $\varphi_p$ are the inclusion and the natural
	projection, respectively. If we apply the Splitting Lemma to the above 
	short exact sequences, then $C_p(C_\ast)$ can be expressed as the following direct sum
	$$B_{p}(C_\ast) \oplus \ell_p(H_p(C_{\ast }))\oplus s_p(B_{p-1}(C_\ast)).$$ Let $\mathbf{c_p},$ 
	$\mathbf{b_p},$ 
	and $\mathbf{h_p}$ be respectively bases of
	$C_p(C_\ast),$ $B_p(C_\ast),$ and $H_p(C_\ast).$
	Then we obtain a new basis $\mathbf{b}_p\sqcup
	\ell_p(\mathbf{h}_p)\sqcup s_p(\mathbf{b}_{p-1})$ for $C_p(C_\ast).$
\begin{definition}
	 The \emph{R-torsion} of $C_{\ast}$ with respect to bases $\{\mathbf{c}_p\}_{0}^n,$ $\{\mathbf{h}_p\}_{0}^{n}$ is defined by 
	$$\mathbb{T}\left(C_{\ast},\{\mathbf{c}_p\}_{0}^n,\{\mathbf{h}_p\}_{0}^n \right)
	=\prod_{p=0}^n \left[\mathbf{b}_p\sqcup \ell_p(\mathbf{h}_p)\sqcup
	s_p(\mathbf{b}_{p-1}), \mathbf{c}_p\right]^{(-1)^{(p+1)}}.$$ Here,
	$\left[\mathbf{b}_p\sqcup \ell_p(\mathbf{h}_p)\sqcup
	s_p(\mathbf{b}_{p-1}), \mathbf{c}_p\right]$ is the determinant of
	the change-base-matrix from basis $\mathbf{c}_p$ to $\mathbf{b}_p\sqcup \ell_p(\mathbf{h}_p)\sqcup s_p(\mathbf{b}_{p-1})$ of $C_{p}(C_\ast).$ 
\end{definition}	
The R-torsion of a general  chain complex $C_{\ast}$ is an element of the dual of the vector space $$\bigotimes_{p=0}^n(\det H_p(C_{\ast}))^{(-1)^p},$$ see \cite[pp. 185]{witten} and \cite[Thm. 2.0.6]{sozen}.

For a smooth $m$-manifold $M$ with a cell decomposition $K,$ there is a chain complex 
$$C_{\ast}(K): 0\to C_m(K)\stackrel{\partial_m}{\rightarrow}C_{m-1}(K)\rightarrow \cdots\rightarrow C_1(K)\stackrel{\partial_1}{\rightarrow} C_0(K)\to 0,$$ where $\partial_i$ is the usual boundary operator. The R-torsion of $M$ is defined as the R-torsion of its cellular chain complex $C_{\ast}(K)$ in the bases $\{\mathbf{c}_i\}_0^m$ and $\{\mathbf{h}_i\}_0^m.$ Here, $\mathbf{c}_i$ is the geometric basis for the $i$-cells $C_i(K),$ $i=0,\ldots,m.$ By \cite[Lem. 2.0.5]{sozen}, the R-torsion of $M$ does not depend on the cell decomposition $K.$ Thus, we write $\mathbb{T}(M,\{\mathbf{h}_i\}_{0}^m)$	instead of $\mathbb{T}(C_{\ast}(K),\{\mathbf{c}_i\}_{0}^m,\{\mathbf{h}_i\}_{0}^m).$ For further details we refer to \cite{sozen,sozen2,turaev}.
\begin{corollary}
\label{cor2}
		Let $Y={\mathbb{S}^1}\times [-\epsilon,+\epsilon]$ be a cylinder with 
		boundary circles ${\mathbb{S}^1}\times\{-\epsilon\}$ and 
		${\mathbb{S}^1}\times\{+\epsilon\},$ where $\epsilon>0.$ 
		Let $\mathbf{h}_i$ be a basis of $H_i(Y)$ for $i=0,1.$ 
		By K\"unneth formula, we have the isomorphisms: 
		$C_i(Y)\overset{\varphi_i}{\cong} C_i(\mathbb{S}^1)$ and  
		$H_i(Y)\overset{[\varphi_i]}{\cong} H_i(\mathbb{S}^1).$ Then \cite[Thm. 3.5]{sozen2} gives the following result
		$$
		\left|\mathbb{T}\left(Y,\left\{\mathbf{h}_0,\mathbf{h}_1\right\}\right) \right|=\left|\mathbb{T}\left(\mathbb{S}^1
		,\left\{[\varphi_0](\mathbf{h}_0),
		[\varphi_1](\mathbf{h}_1)\right\}\right) \right|=1.$$ 
	\end{corollary}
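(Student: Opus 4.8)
The plan is to reduce the statement to the computation of the R-torsion of the circle and then carry that out by hand. First I would realise $Y=\mathbb{S}^1\times I$, with $I:=[-\epsilon,+\epsilon]$, with a product cell decomposition: equip $\mathbb{S}^1$ with its minimal CW structure (one $0$-cell $p$, one $1$-cell $\sigma$, so that $\partial_1\sigma=p-p=0$), equip $I$ with the standard CW structure, and give $Y$ the product structure with geometric cell bases the products of the geometric bases on the factors. The isomorphisms $C_i(Y)\overset{\varphi_i}{\cong}C_i(\mathbb{S}^1)$ and $H_i(Y)\overset{[\varphi_i]}{\cong}H_i(\mathbb{S}^1)$ of the statement are the ones produced by the K\"unneth reduction --- equivalently, those induced by the deformation retraction of $Y$ onto its core circle $\mathbb{S}^1\times\{0\}$: since $I$ is contractible, $C_\ast(I)$ is chain equivalent to $\mathbb{R}$ concentrated in degree $0$, and tensoring with $C_\ast(\mathbb{S}^1)$ recovers $C_\ast(\mathbb{S}^1)$ together with its homology.

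Second, I would feed this product situation into \cite[Thm. 3.5]{sozen2}, the multiplicativity/invariance statement for R-torsion. The contractible factor $I$ has $\mathbb{T}(I)=1$ (taken with the point basis of $H_0(I)$) and Euler characteristic $\chi(I)=1$, whereas $\chi(\mathbb{S}^1)=0$; consequently, in the resulting formula the torsion of $\mathbb{S}^1$ occurs with weight $\chi(I)=1$, while every contribution coming from the $I$-factor (its torsion and the homology-mixing determinants of the K\"unneth splitting) occurs with weight $\chi(\mathbb{S}^1)=0$ and therefore disappears. Passing to absolute values gives exactly
\[|\mathbb{T}(Y,\{\mathbf{h}_0,\mathbf{h}_1\})|=|\mathbb{T}(\mathbb{S}^1,\{[\varphi_0](\mathbf{h}_0),[\varphi_1](\mathbf{h}_1)\})|,\]
which is the first equality of the Corollary.

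Third, I would compute $|\mathbb{T}(\mathbb{S}^1,\cdot)|$ directly from the minimal CW structure. Because $\partial_1=0$ one has $B_1=B_0=0$, $Z_1=C_1$, $H_1\cong C_1$ and $H_0\cong C_0$, so in the defining product of the R-torsion only the two change-of-basis determinants $[\ell_1(\mathbf{h}_1),\mathbf{c}_1]$ (with exponent $+1$) and $[\ell_0(\mathbf{h}_0),\mathbf{c}_0]$ (with exponent $-1$) survive, and $\mathbb{T}(\mathbb{S}^1,\cdot)$ is their quotient. For the geometric bases of $H_0(\mathbb{S}^1)$ and $H_1(\mathbb{S}^1)$ both determinants equal $1$, so $\mathbb{T}(\mathbb{S}^1,\cdot)=\pm1$; the same cancellation in absolute value holds for the bases transported from $Y$ along the $[\varphi_i]$, so $|\mathbb{T}(\mathbb{S}^1,\cdot)|=1$. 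Chaining this with the equality from Step 2 proves the Corollary.

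The delicate point is bookkeeping rather than analysis: one must check that $\varphi_i$ is genuinely compatible with the geometric cellular data (so that \cite[Thm. 3.5]{sozen2} applies verbatim), and --- the subtler issue --- that identifying $H_\ast(Y)$ with $H_\ast(\mathbb{S}^1)$ through $[\varphi_i]$ introduces no scaling factor that would break the cancellation of the two determinants in Step 3. Once these identifications are pinned down, every remaining step is immediate from $\partial_1=0$ and $\mathbb{T}(I)=1$.
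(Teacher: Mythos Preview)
The paper gives no proof beyond what is written in the statement itself: the K\"unneth identification $C_\ast(Y)\cong C_\ast(\mathbb{S}^1)$ yields the first equality, and \cite[Thm.~3.5]{sozen2} is quoted as a black box for $|\mathbb{T}(\mathbb{S}^1)|=1$. From every other appearance of that citation in the paper --- eliminating the $\mathbb{T}(\mathbb{S}_j)$ in the proof of Theorem~\ref{thm1}, eliminating $\mathbb{T}(\mathbb{S}_1)$ and $\mathbb{T}(\mathbb{S}')$ in Cases~1--4 of Theorem~\ref{thm2}, and eliminating $\mathbb{T}(d(N))$, $\mathbb{T}(d(X))$ in \S4 --- it is the assertion that a closed orientable odd-dimensional manifold has $|\mathbb{T}|=1$; it is \emph{not} a product formula. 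The product formula you invoke in Step~2 is a different result (in this paper it appears as \cite[Thm.~3.1]{ozelszn}, used in \S4.2). Your Step~2 is therefore mathematically valid but mis-cited, and heavier than the paper's one-line chain identification.

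The genuine gap is Step~3. You correctly note that with $\partial_1=0$ the torsion of $\mathbb{S}^1$ reduces to the quotient $[\ell_1(\mathbf{h}_1),\mathbf{c}_1]\big/[\ell_0(\mathbf{h}_0),\mathbf{c}_0]$, and that for the geometric bases this is $\pm1$. But the claim that ``the same cancellation in absolute value holds for the bases transported from $Y$'' is not established: writing $[\varphi_0](\mathbf{h}_0)=a\,[p]$ and $[\varphi_1](\mathbf{h}_1)=b\,[\sigma]$, your own formula gives $\mathbb{T}(\mathbb{S}^1)=b/a$, and nothing in your argument forces $|a|=|b|$. That is precisely the content supplied by \cite[Thm.~3.5]{sozen2} via Poincar\'e duality on the closed odd-dimensional manifold $\mathbb{S}^1$. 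You flag this yourself as ``the subtler issue'' in your last paragraph but do not close it; in effect you have routed around the one cited theorem and left the hole exactly where that theorem is needed.
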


\section{Proofs of main results}
\begin{proof}[Proof of Theorem \ref{thm1}]
For any manifold $M$, let $C_*(M)$ denote the associated cellular chain complex. 
\begin{figure}[h]
		\begin{center}
			\begin{tikzpicture}[rotate=180,every tqft/.style={transform shape}]
\pic[
tqft,
incoming boundary components=0,
outgoing boundary components=3,
draw,
name=a
];
\pic[
tqft,
incoming boundary components=3,
outgoing boundary components=0,
draw,
at=(a-outgoing boundary 1),
cobordism edge/.style={draw},every outgoing boundary component/.style={draw,black!50!black},every incoming boundary component/.style={draw,dashed,black!50!black},
name=b
];
\node at (-0.70,-2.0) {$S_3$};
			\node at (1.90,-1.55) {$S_2$};
			\node at (4.70,-2.0) {$S_1$};
\end{tikzpicture}
\end{center}
\caption{Double of the pair of pants $\Sigma_{0,3}.$}
\end{figure}
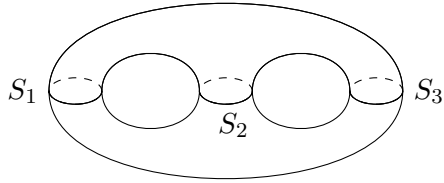

Note that $\Sigma_{2,0}$ is the double of $\Sigma_{0,3}.$ Let 
$\mathcal{B}$ be the intersection of the pairs of pants in 
$\Sigma_{2,0},$ so $\mathcal{B}$ is homeomorphic to the disjoint union of three circles, $\mathbb{S}_1 \amalg \mathbb{S}_2 \amalg\mathbb{S}_3.$ Then there is the natural short exact sequence of the chain complexes
\begin{equation}\label{eq2}
	0\to C_{\ast}(\mathcal{B})\rightarrow C_{\ast}(\Sigma_{0,3})\oplus C_{\ast}(\Sigma_{0,3})
	\rightarrow C_{\ast}(\Sigma_{2,0})\to 0
\end{equation} and the Mayer-Vietoris sequence associated to (\ref{eq2}) is
\begin{eqnarray}\label{eq1}
	&&\mathcal{H}_{\ast}:0\overset{\alpha}{\to  }
	H_2(\Sigma_{2,0})\overset{f}{ \to }
	H_1(\mathcal{B})\overset{g}{\to  } H_1(\Sigma_{0,3})\oplus H_1(\Sigma_{0,3}) \overset{h}{\to } H_1(\Sigma_{2,0}) \\
	& & \quad \quad \quad\overset{i}{ \to  }  H_0(\mathcal{B})\overset{j}{ \to } H_0(\Sigma_{0,3})\oplus H_0(\Sigma_{0,3}) \overset{k}{\to  } H_0(\Sigma_{2,0})\overset{\ell}{\to  } 0. \nonumber
\end{eqnarray}
 Let us denote
by $C_p(\mathcal{H}_{\ast})$ the vector spaces in (\ref{eq1}) for $p=0,\ldots,6$ and consider the short exact sequences (\ref{eq3}) and (\ref{eq4}) for $\mathcal{H}_{\ast}.$ Let us take the isomorphism $s_{p}:B_{p-1}(\mathcal{H}_{\ast}) \rightarrow s_{p}(B_{p-1}(\mathcal{H}_{\ast}))$ obtained by the First Isomorphism Theorem as a section of $C_p(\mathcal{H}_{\ast})\rightarrow B_{p-1}(\mathcal{H}_{\ast})$ for each $p.$ By the exactness of $\mathcal{H}_{\ast},$ we get 
$Z_p(\mathcal{H}_{\ast})=B_p(\mathcal{H}_{\ast}).$ Applying the Splitting Lemma to (\ref{eq4}), we have
\begin{equation}\label{eq5A}
C_p(\mathcal{H}_{\ast})=B_p(\mathcal{H}_{\ast})\oplus s_{_{p}}(B_{p-1}(\mathcal{H}_{\ast})).
\end{equation}
 Then the R-torsion of $\mathcal{H}_{\ast}$
with respect to basis $\{\mathbf{h}_p\}_{0}^n$ is given as follows
\begin{equation*}
\mathbb{T}\left( \mathcal{H}_{\ast},\{\mathbf{h}_p\}_{0}^n,\{0\}_{0}^n\right)
 =\prod_{p=0}^n [\mathbf{h}'_p,\mathbf{h}_p]^{(-1)^{(p+1)}},
\end{equation*}
where $\mathbf{h}'_p=\mathbf{b}_p\sqcup
s_p(\mathbf{b}_{p-1})$ for each $p.$ In \cite{milnor}, Milnor proved that the R-torsion does not depend on bases $\mathbf{b}_p$ and
sections $s_p, \ell_p.$ Therefore, we will
choose a suitable bases $\mathbf{b}_p$ and sections $s_p$ so that
$\mathbb{T}( \mathcal{H}_{\ast},\{\mathbf{h}_p\}_{0}^n,\{0\}_{0}^n)=1.$ \\

Let us consider the space $C_0(\mathcal{H}_{\ast})=H_0(\Sigma_{2,0})$ in (\ref{eq5A}). Then $\mathrm{Im}(\ell)=0$ yields
\begin{equation}\label{eq8}
C_0(\mathcal{H}_{\ast})=\mathrm{Im}(k)\oplus
s_{_0}(\mathrm{Im}(\ell))=\mathrm{Im}(k).
\end{equation}
Since $\{(\mathbf{h}_0^{\Sigma_{0,3}},0),(0,\mathbf{h}_0^{\Sigma_{0,3}})\}$ is the basis of $H_0(\Sigma_{0,3})\oplus H_0(\Sigma_{0,3}),$ 
$$\{\alpha_{_{11}}k(\mathbf{h}_0^{\Sigma_{0,3}},0)+\alpha_{_{12}}k(0,\mathbf{h}_0^{\Sigma_{0,3}})\}$$ can be taken as the basis $\mathbf{h}^{\mathrm{Im}(k)}$ of $\mathrm{Im}(k),$ where $(\alpha_{_{11}}, \alpha_{_{12}})$ is a non-zero vector. By (\ref{eq8}), $\mathbf{h}^{\mathrm{Im}(k)}$ becomes the obtained basis $\mathbf{h}'_0$ of $C_0(\mathcal{H}_{\ast}).$ If we take the initial basis $\mathbf{h}_0$
(namely, $\mathbf{h}_0^{\Sigma_{2,0}}$) of $C_0(\mathcal{H}_{\ast})$ as
$\mathbf{h}'_0,$ then 
\begin{equation}\label{eq9}
[\mathbf{h}'_0,\mathbf{h}_0]=1.
\end{equation}

If we use (\ref{eq5A}) for
$C_1(\mathcal{H}_{\ast})=H_0(\Sigma_{0,3})\oplus H_0(\Sigma_{0,3}),$ then we get
\begin{equation}\label{eq11}
C_1(\mathcal{H}_{\ast})=\mathrm{Im}(j)\oplus s_{_1}(\mathrm{Im}(k)).
\end{equation}
Note that the given basis $\mathbf{h}_1$ of $C_1(\mathcal{H}_{\ast})$ is
$\{(\mathbf{h}_0^{\Sigma_{0,3}},0),(0,\mathbf{h}_0^{\Sigma_{0,3}})\}.$
Since $\mathrm{Im}(j)$ is $1$-dimensional subspace of $2$-dimensional space $C_1(\mathcal{H}_{\ast}),$ there is a non-zero vector
$(a_{_{21}},a_{_{22}})$ such that
$\{a_{_{21}}(\mathbf{h}_0^{\Sigma_{0,3}},0)+a_{_{22}}(0,\mathbf{h}_0^{\Sigma_{0,3}})\}$
 is a basis of $\mathrm{Im}(j).$ In the previous step, the basis of 
 $\mathrm{Im}(k)$ was chosen as $\mathbf{h}^{\mathrm{Im}(k)}$ so
  $$s_{_1}(\mathbf{h}^{\mathrm{Im}(k)})=a_{_{11}}(\mathbf{h}_0^{\Sigma_{0,3}},0)+a_{_{12}}(0,\mathbf{h}_0^{\Sigma_{0,3}}).$$ Then we obtain a non-singular $2\times 2$ matrix $A=[a_{ij}]$ with entries in $\mathbb{R}.$ Let us choose the basis of
$\mathrm{Im}(j)$ as
$$\mathbf{h}^{\mathrm{Im}(j)}=\{-(\det A)^{-1}[a_{_{21}}(\mathbf{h}_0^{\Sigma_{0,3}},0)+a_{_{22}}(0,\mathbf{h}_0^{\Sigma_{0,3}})]\}.$$
 By (\ref{eq11}), $\{\mathbf{h}^{\mathrm{Im}(j)},s_{_1}(\mathbf{h}^{\mathrm{Im}(k)})\}$ becomes
the obtained basis $\mathbf{h}'_1$ of $C_1(\mathcal{H}_{\ast}).$ Hence, we get
\begin{equation}\label{eq12}
[\mathbf{h}'_1,\mathbf{h}_1]=1.
\end{equation}

Considering (\ref{eq5A}) for $C_2(\mathcal{H}_{\ast})=H_0(\mathcal{B}),$ we obtain
\begin{equation}\label{eq14}
C_2(\mathcal{H}_{\ast})=\mathrm{Im}(i)\oplus s_{_2}(\mathrm{Im}(j)).
\end{equation}
Recall that $\{\mathbf{h}_0^{\mathbb{S}_1},\mathbf{h}_0^{\mathbb{S}_2},\mathbf{h}_0^{\mathbb{S}_3}\}$ is the given basis $\mathbf{h}_2$ of $C_2(\mathcal{H}_{\ast}).$ Since $\mathrm{Im}(i)$ and $s_{_2}(\mathrm{Im}(j))$ are $2$ and $1$-dimensional subspaces of $3$-dimensional space 
$C_2(\mathcal{H}_{\ast}),$ there are non-zero vectors 
$(b_{_{i1}},b_{_{i2}},b_{_{i3}}),$ $i=1,2,3$ such
that $\{\sum_{i=1}^{3}b_{{ji}}\mathbf{h}_0^{\mathbb{S}_i}\}_{j=1}^{2}$ is a basis of $\mathrm{Im}(i)$ and 
$$s_{_2}(\mathbf{h}^{\mathrm{Im}(j)})=\sum_{i=1}^{3}b_{_{3i}}\mathbf{h}_0^{\mathbb{S}_i}$$ is a basis of $s_{_2}(\mathrm{Im}(j)).$ Then $3\times 3$
 real matrix $B=[b_{_{ij}}]$  is invertible. Let us choose the basis of $\mathrm{Im}(i)$ as follows
 \begin{equation*}
 \mathbf{h}^{\mathrm{Im}(i)}=\left\{(\det B)^{-1} \sum_{i=1}^{3}b_{_{1i}}\mathbf{h}_0^{\mathbb{S}_i}, \;
\sum_{i=1}^{3}b_{_{2i}}\mathbf{h}_0^{\mathbb{S}_i}\right\}.
 \end{equation*}
 By (\ref{eq14}), $\{\mathbf{h}^{\mathrm{Im}(i)},
s_{_2}(\mathbf{h}^{\mathrm{Im}(j)})\}$ becomes the obtained basis $\mathbf{h}'_2$ of
$C_2(\mathcal{H}_{\ast})$ and we have
\begin{equation}\label{eq17}
[\mathbf{h}'_2,\mathbf{h}_2]=1.
\end{equation}

Using (\ref{eq5A}), $C_3(\mathcal{H}_{\ast})=H_1(\Sigma_{2,0})$ can be expressed as the following direct sum
\begin{equation}\label{eq19}
C_3(\mathcal{H}_{\ast})=\mathrm{Im}(h)\oplus s_{_3}(\mathrm{Im}(i)).
\end{equation}
Note that the basis of $H_1(\Sigma_{0,3})\oplus H_1(\Sigma_{0,3})$ is 
$$\{(\mathbf{h}_{1,1}^{\Sigma_{0,3}},0),(0,\mathbf{h}_{1,1}^{\Sigma_{0,3}}),
(\mathbf{h}_{1,2}^{\Sigma_{0,3}},0),(0,\mathbf{h}_{1,2}^{\Sigma_{0,3}})\}.$$ Since $\mathrm{Im}(h)$ is a $2$-dimensional space, we can choose the basis of
$\mathrm{Im}(h)$ as 
\begin{eqnarray*}
&&\mathbf{h}^{\mathrm{Im}(h)}=\left\{c_{_{11}}h(\mathbf{h}_{1,1}^{\Sigma_{0,3}},0)+c_{_{12}}h(0,\mathbf{h}_{1,1}^{\Sigma_{0,3}})+
c_{_{13}}h(\mathbf{h}_{1,2}^{\Sigma_{0,3}},0)+c_{_{14}}h(0,\mathbf{h}_{1,2}^{\Sigma_{0,3}}),
\right.\\
&& \quad \quad \quad \quad \;\;\left.  c_{_{21}}h(\mathbf{h}_{1,1}^{\Sigma_{0,3}},0)+c_{_{22}}h(0,\mathbf{h}_{1,1}^{\Sigma_{0,3}})+c_{_{23}}h(\mathbf{h}_{1,2}^{\Sigma_{0,3}},0)+c_{_{24}}h(0,\mathbf{h}_{1,2}^{\Sigma_{0,3}})\right\}.
\end{eqnarray*}
Here, $(c_{_{i1}},c_{_{i2}},c_{_{i3}},c_{_{i4}})$ is a non-zero vector 
for $i=1,2.$ Using (\ref{eq19}), we have that
$\left\{\mathbf{h}^{\mathrm{Im}(h)},
s_{_3}(\mathbf{h}^{\mathrm{Im}(i)})\right\}$ is the obtained basis $\mathbf{h}'_3$ of
$C_3(\mathcal{H}_{\ast}).$ If we take the initial basis
$\mathbf{h}_3$ (namely, $\mathbf{h}_1^{\Sigma_{2,0}}$) of
$C_3(\mathcal{H}_{\ast})$ as $\mathbf{h}'_3,$ then we get
\begin{equation}\label{eq20}
[\mathbf{h}'_3,\mathbf{h}_3] =1.
\end{equation}

If we consider (\ref{eq5A}) for $C_4(\mathcal{H}_{\ast})=H_1(\Sigma_{0,3})\oplus H_1(\Sigma_{0,3}),$ then we obtain
\begin{equation}\label{eq22}
C_4(\mathcal{H}_{\ast})=\mathrm{Im}(g)\oplus s_{_4}(\mathrm{Im}(h)).
\end{equation}
Recall that $\{(\mathbf{h}_{1,1}^{\Sigma_{0,3}},0),(0,\mathbf{h}_{1,1}^{\Sigma_{0,3}}),
(\mathbf{h}_{1,2}^{\Sigma_{0,3}},0),(0,\mathbf{h}_{1,2}^{\Sigma_{0,3}})\}$ is the given basis $\mathbf{h}_4$ of $C_4(\mathcal{H}_{\ast}).$ In the previous step, $\mathbf{h}^{\mathrm{Im}(h)}$ was chosen as the basis  of 
 $\mathrm{Im}(h)$ so
\begin{eqnarray*}
\nonumber&& {s_{_4}(\mathbf{h}^{\mathrm{Im}(h)})}
=\left\{c_{_{11}}(\mathbf{h}_{1,1}^{\Sigma_{0,3}},0)+c_{_{12}}(0,\mathbf{h}_{1,1}^{\Sigma_{0,3}})+c_{_{13}}(\mathbf{h}_{1,2}^{\Sigma_{0,3}},0)+c_{_{14}}(0,\mathbf{h}_{1,2}^{\Sigma_{0,3}}),\right.\\
& & \quad\quad\quad \quad\quad\quad\; \left.c_{_{21}}(\mathbf{h}_{1,1}^{\Sigma_{0,3}},0)+c_{_{22}}(0,\mathbf{h}_{1,1}^{\Sigma_{0,3}})+c_{_{23}}(\mathbf{h}_{1,2}^{\Sigma_{0,3}},0)+c_{_{24}}(0,\mathbf{h}_{1,2}^{\Sigma_{0,3}})\right\}
\end{eqnarray*}
is a basis of $s_{_4}(\mathrm{Im}(h)).$ As $\mathrm{Im}(g)$ is $2$-dimensional
subspace of $4$-dimensional space $C_4(\mathcal{H}_{\ast}),$ there are non-zero vectors
$(c_{_{i1}},c_{_{i2}},c_{_{i3}},c_{_{i4}}),$ $i=3,4$ such that
\begin{eqnarray*}
\nonumber&&\left\{c_{_{31}}(\mathbf{h}_{1,1}^{\Sigma_{0,3}},0)+c_{_{32}}(0,\mathbf{h}_{1,1}^{\Sigma_{0,3}})+c_{_{33}}(\mathbf{h}_{1,2}^{\Sigma_{0,3}},0)+c_{_{34}}(0,\mathbf{h}_{1,2}^{\Sigma_{0,3}}),\right.\\
& &  \left. \; \;  c_{_{41}}(\mathbf{h}_{1,1}^{\Sigma_{0,3}},0)+c_{_{42}}(0,\mathbf{h}_{1,1}^{\Sigma_{0,3}})+c_{_{43}}(\mathbf{h}_{1,2}^{\Sigma_{0,3}},0)+c_{_{44}}(0,\mathbf{h}_{1,2}^{\Sigma_{0,3}})\right\}
\end{eqnarray*}
is a basis of $\mathrm{Im}(g)$ and $C=[c_{_{ij}}]$ is the
 non-singular $4\times 4$ real matrix. Thus, we can choose the basis of $\mathrm{Im}(g)$ as  
\begin{eqnarray*}
\nonumber \mathbf{h}^{\mathrm{Im}(g)}=\left\{(\det C)^{-1}[c_{_{31}}(\mathbf{h}_{1,1}^{\Sigma_{0,3}},0)+c_{_{32}}(0,\mathbf{h}_{1,1}^{\Sigma_{0,3}})+c_{_{33}}(\mathbf{h}_{1,2}^{\Sigma_{0,3}},0)+c_{_{34}}(0,\mathbf{h}_{1,2}^{\Sigma_{0,3}})],\right.\\
\quad \quad \quad \quad \left. \; \; \; c_{_{41}}(\mathbf{h}_{1,1}^{\Sigma_{0,3}},0)+c_{_{42}}(0,\mathbf{h}_{1,1}^{\Sigma_{0,3}})+c_{_{43}}(\mathbf{h}_{1,2}^{\Sigma_{0,3}},0)+c_{_{44}}(0,\mathbf{h}_{1,2}^{\Sigma_{0,3}})\right\}.
\end{eqnarray*}
 By (\ref{eq22}), $\{\mathbf{h}^{\mathrm{Im}(g)},
{s_{_4}(\mathbf{h}^{\mathrm{Im}(h)})}\}$ becomes the obtained
basis $\mathbf{h}'_4$ of $C_4(\mathcal{H}_{\ast})$ and the following equation holds
\begin{equation}\label{eq25}
[\mathbf{h}'_4,\mathbf{h}_4]=1.
\end{equation}

Consider the space $C_5(\mathcal{H}_{\ast})=H_1(\mathcal{B}),$ then (\ref{eq5A}) becomes
\begin{equation}\label{eq27}
C_5(\mathcal{H}_{\ast})=\mathrm{Im}(f)\oplus s_{_5}(\mathrm{Im}(g)).
\end{equation}
Recall that the initial basis $\mathbf{h}_5$ of $C_5(\mathcal{H}_{\ast})$ is
$\{\mathbf{h}_1^{\mathbb{S}_1},\mathbf{h}_1^{\mathbb{S}_2},\mathbf{h}_1^{\mathbb{S}_3}\}.$ Since
$\mathrm{Im}(f)$ and $s_{_5}(\mathrm{Im}(g))$ are respectively $1$ and
$2$-dimensional subspaces of $3$-dimensional space $C_5(\mathcal{H}_{\ast}),$ there are non-zero vectors
$(d_{_{i1}},d_{_{i2}},d_{_{i3}}),$ $i=1,2,3$ such that $\{\sum_{i=1}^{3}d_{_{1i}}\mathbf{h}_1^{\mathbb{S}_i}\}$ is a  basis of $\mathrm{Im}(f)$ and 
$${s_{_5}(\mathbf{h}^{\mathrm{Im}(g)})} =\left\{ \sum_{i=1}^{3}d_{_{2i}}\mathbf{h}_1^{\mathbb{S}_i}, \; \sum_{i=1}^{3}d_{_{3i}}\mathbf{h}_1^{\mathbb{S}_i}\right\}$$
is a basis of $s_{_5}(\mathrm{Im}(g)).$ Then we get a non-singular $3\times 3$ real matrix $D=[d_{_{ij}}].$ Let us choose the basis of ${\mathrm{Im}(f)}$ as $$\mathbf{h}^{\mathrm{Im}(f)}=\left\{(\det D)^{-1} \sum_{i=1}^{3}d_{_{1i}}\mathbf{h}_1^{\mathbb{S}_i}\right\}.$$ By (\ref{eq27}), $\{\mathbf{h}^{\mathrm{Im}(f)},
{s_{_5}(\mathbf{h}^{\mathrm{Im}(g)})}\}$ becomes the obtained
basis $\mathbf{h}'_5$ of $C_5(\mathcal{H}_{\ast}).$ Hence, we obtain
\begin{equation}\label{eq30}
[\mathbf{h}'_5,\mathbf{h}_5]=1.
\end{equation}

Finally, let us consider $C_6(\mathcal{H}_{\ast})=H_2(\Sigma_{2,0})$. 
Since $\mathrm{Im}(\alpha)=0,$ (\ref{eq5A}) becomes
\begin{equation}\label{eq32}
C_6(\mathcal{H}_{\ast})=\mathrm{Im}(\alpha)\oplus
s_{_6}(\mathrm{Im}(f))=s_{_6}(\mathrm{Im}(f)).
\end{equation}
From (\ref{eq32}) it follows that
$s_{_6}(\mathbf{h}^{\mathrm{Im}(f)})$
is the obtained basis $\mathbf{h}'_6$ of $C_6(\mathcal{H}_{\ast}).$ If we take
the initial basis $\mathbf{h}_6$ (namely, $\mathbf{h}_2^{\Sigma_{2,0}}$) of
$C_6(\mathcal{H}_{\ast})$ as
$s_{_6}(\mathbf{h}^{\mathrm{Im}(f)}),$ then we have
\begin{equation}\label{eq33}
 [\mathbf{h}'_6,\mathbf{h}_6]=1.
 \end{equation}
 
If we combine (\ref{eq9}), (\ref{eq12}), (\ref{eq17}),
(\ref{eq20}), (\ref{eq25}), (\ref{eq30}), and (\ref{eq33}), then we get
\begin{equation}\label{eq34}
  \mathbb{T}(\mathcal{H}_{\ast},\{\mathbf{h}_p\}_{0}^6,\{0\}_{0}^6)
  =\prod_{p=0}^6 [\mathbf{h}'_p,\mathbf{h}_p]^{(-1)^{(p+1)}}=1.
\end{equation}
Since the natural bases in (\ref{eq2}) are
compatible, \cite[Thm. 3.2]{milnor} yields
\begin{equation}\label{eq35}
 \mathbb{T}(\Sigma_{0,3},\{\mathbf{h}_i^{\Sigma_{0,3}}\}_{0}^1)^2=
 \prod_{j=1}^3 \mathbb{T}(\mathbb{S}_j,\{\mathbf{h}_i^{\mathbb{S}_j}\}_{0}^1)
 \; \mathbb{T}(\Sigma_{2,0},\{\mathbf{h}_\eta^{\Sigma_{2,0}}\}_{0}^2) \;
\mathbb{T}(\mathcal{H}_{\ast},\{\mathbf{h}_p\}_{0}^6,\{0\}_{0}^6).
\end{equation}
Considering \cite[Thm. 3.5]{sozen2}, (\ref{eq34}), and (\ref{eq35}), we obtain
\begin{equation}\label{eq36}
|\mathbb{T}(\Sigma_{0,3},\{\mathbf{h}_i^{\Sigma_{0,3}}\}_{0}^1)|=\sqrt{|\mathbb{T}(\Sigma_{2,0},\{\mathbf{h}_\eta^{\Sigma_{2,0}}\}_{0}^2)|}.
\end{equation}
By Poincar\'{e} Duality, Theorem 4.1 in \cite{sozen2} and (\ref{eq36}), the main formula holds
 $$|\mathbb{T}(\Sigma_{0,3},\{\mathbf{h}_i^{\Sigma_{0,3}}\}_{0}^1)|= \sqrt{\left|\frac{\det \Delta_{0,2}(\Sigma_{2,0})
}{\det\wp(\mathbf{h}_{\Sigma_{2,0}}^1,\Gamma)}\right|}.$$
\end{proof}
A \emph{pants decomposition} of $\Sigma_{g,n}$ is a finite collection of disjoint smoothly embedded circles cutting $\Sigma_{g,n}$ into pairs of pants $\Sigma_{0,3}$ and tori with one boundary circle $\Sigma_{1,1}.$ The number of complementary components is 
$|\chi(\Sigma_{g,n})|=2g-2+n.$
\begin{figure}[h]
$$
			\begin{tikzpicture}[scale=0.55,rotate=60,every tqft/.style={transform shape}]
			\pic[
			tqft,
			incoming boundary components=1,
			outgoing boundary components=2,
			offset=0,
			genus=0,
			hole 3/.style={},
			genus lower/.style={solid},
			cobordism edge/.style={draw},every outgoing boundary component/.style={draw,black!50!black},every incoming boundary component/.style={draw,black!50!black},
			between incoming and outgoing/.style={},
			between outgoing 2 and 3/.style={}, anchor={(-1,4)}
			];
			\pic[
			tqft,
			incoming boundary components=1,
			outgoing boundary components=2,
			offset=-0,
			genus=0,
			hole 3/.style={},
			genus lower/.style={},
			cobordism edge/.style={draw},every outgoing boundary component/.style={draw,black!50!black},every incoming boundary component/.style={draw,black!50!black},
			between incoming and outgoing/.style={},
			between outgoing 2 and 3/.style={}, anchor={(0,5)}
			];
			\pic[
			tqft,
			incoming boundary components=1,
			outgoing boundary components=2,
			offset=-0,
			genus=0,
			hole 3/.style={},
			genus lower/.style={},
			cobordism edge/.style={draw},every outgoing boundary component/.style={draw,black!50!black},every incoming boundary component/.style={draw,black!50!black},
			between incoming and outgoing/.style={},
			between outgoing 2 and 3/.style={}, anchor={(-2,3)}
			];
			
			\begin{scope}[scale=0.78,rotate=270,shift={(-14.7,2.52)}]
			\begin{scope}[scale=.95]
			\path[rounded corners=5pt] (-0.5,0)--(0,0.5)--(0.5,0) (-0.5,0)--(0,-0.55)--    (.9,0);
			\draw[rounded corners=11pt] (-1,.05)--(0,-.6)--(1,.05);
			\draw[rounded corners=8pt] (-.9,0)--(0,.7)--(.9,0);
			\end{scope}
			Cut 1
			\draw[black,-,rotate=90] (0.,0.85) arc (270:90:.08 and 0.310);
			\draw[densely dashed,black,rotate=90] (0.,0.85) 
			arc (-90:90:.10 and .310);
			Cut 2
			\draw (1.0605,0.5656) arc (45:315:1.5 and 0.8);
			\draw (1.0605,0.5656) to[out=-27.9,in=180] (2,.5);
			\draw (1.0605,-0.5656) to[out=27.9,in=180] (2,-.4);
			\end{scope}
			
			\begin{scope}[scale=0.78,rotate=90,shift={(8.3,-2.6)}]
			\begin{scope}[scale=.95,rotate=180]
			\path[rounded corners=5pt] (-0.5,0)--(0,0.5)--(0.5,0) (-0.5,0)--(0,-0.55)--    (.9,0);
			\draw[rounded corners=11pt] (-1,.05)--(0,-.6)--(1,.05);
			\draw[rounded corners=8pt] (-.9,0)--(0,.7)--(.9,0);
			\end{scope}
			Cut 1
			\draw [densely dashed,black,rotate=90](0.,0.85) arc (270:90:.08 and 0.310);
			\draw[black,-,rotate=90] (0.,0.85) arc (-90:90:.10 and .310);
			\draw (1.0605,0.5656) arc (45:315:1.5 and 0.8);
			\draw (1.0605,0.5656) to[out=-27.9,in=180] (2,.5);
			\draw (1.0605,-0.5656) to[out=27.9,in=180] (2,-.4);
			\end{scope}
			
			\begin{scope}[scale=0.78,rotate=90,shift={(5.64,-5.170)}]
			\begin{scope}[scale=.95,rotate=180]
			\path[rounded corners=5pt] (-0.5,0)--(0,0.5)--(0.5,0) (-0.5,0)--(0,-0.55)--    (.9,0);
			\draw[rounded corners=11pt] (-1,.05)--(0,-.6)--(1,.05);
			\draw[rounded corners=8pt] (-.9,0)--(0,.7)--(.9,0);
			\end{scope}
			Cut 1
			\draw [densely dashed,black,rotate=90](0.,0.85) arc (270:90:.08 and 0.310);
			\draw[black,-,rotate=90] (0.,0.85) arc (-90:90:.10 and .310);
			\draw (1.0605,0.5656) arc (45:315:1.5 and 0.8);
			\draw (1.0605,0.5656) to[out=-27.9,in=180] (2,.5);
			\draw (1.0605,-0.5656) to[out=27.9,in=180] (2,-.4);
			\end{scope}

			\node at (8,3) {$\ddots$};	
			
			\begin{scope}[scale=0.78,rotate=90,shift={(3.1,-7.73)}]
			\begin{scope}[scale=.95,rotate=180]
			\path[rounded corners=5pt] (-0.5,0)--(0,0.5)--(0.5,0) (-0.5,0)--(0,-0.55)--    (.9,0);
			\draw[rounded corners=11pt] (-1,.05)--(0,-.6)--(1,.05);
			\draw[rounded corners=8pt] (-.9,0)--(0,.7)--(.9,0);
			\end{scope}
			Cut 1
			\draw [densely dashed,black,rotate=90](0.,0.85) arc (270:90:.08 and 0.310);
			\draw[black,-,rotate=90] (0.,0.85) arc (-90:90:.10 and .310);
			\draw (1.0605,0.5656) arc (45:315:1.5 and 0.8);
			\draw (1.0605,0.5656) to[out=-27.9,in=180] (2,.5);
			\draw (1.0605,-0.5656) to[out=27.9,in=180] (2,-.4);
			\end{scope}
			
			\begin{scope}[scale=0.78,rotate=90,shift={(-1.8,-10.31)}]
			\begin{scope}[scale=.95,rotate=180]
			\path[rounded corners=5pt] (-0.5,0)--(0,0.5)--(0.5,0) (-0.5,0)--(0,-0.55)--    (.9,0);
			\draw[rounded corners=11pt] (-1,.05)--(0,-.6)--(1,.05);
			\draw[rounded corners=8pt] (-.9,0)--(0,.7)--(.9,0);
			\end{scope}
			Cut 1
			\draw [densely dashed,black,rotate=90](0.,0.85) arc (270:90:.08 and 0.310);
			\draw[black,-,rotate=90] (0.,0.85) arc (-90:90:.10 and .310);
			\draw (1.0605,0.5656) arc (45:315:1.5 and 0.8);
			\draw (1.0605,0.5656) to[out=-27.9,in=180] (2,.5);
			\draw (1.0605,-0.5656) to[out=27.9,in=180] (2,-.4);
			\end{scope}

			\begin{scope}[shift={(-1,-1)}]
			\pic[
			tqft,
			incoming boundary components=1,
			outgoing boundary components=2,
			offset=-0,
			genus=0,
			hole 3/.style={},
			genus lower/.style={},
			cobordism edge/.style={draw},every outgoing boundary component/.style={draw,black!50!black},every incoming boundary component/.style={draw,black!50!black},
			between incoming and outgoing/.style={},
			between outgoing 2 and 3/.style={}, anchor={(-3.5,1.6)}
			];
			\end{scope}
			
			\pic[
			tqft,
			incoming boundary components=1,
			outgoing boundary components=2,
			offset=-0,
			genus=0,
			hole 3/.style={},
			genus lower/.style={},
			cobordism edge/.style={draw},every outgoing boundary component/.style={draw,black!50!black},every incoming boundary component/.style={draw,black!50!black},
			between incoming and outgoing/.style={},
			between outgoing 2 and 3/.style={}, anchor={(-4.,0.10)}
			];
			
			\pic[
			tqft,
			incoming boundary components=1,
			outgoing boundary components=2,
			offset=-0,
			genus=0,
			hole 3/.style={},
			genus lower/.style={},
			cobordism edge/.style={draw},every outgoing boundary component/.style={draw,black!50!black},every incoming boundary component/.style={draw,black!50!black},
			between incoming and outgoing/.style={},
			between outgoing 2 and 3/.style={}, anchor={(-5.,-1.9)}
			];
			
			\node at (12,-2.8) {$\ddots$};
			

			
			
			\node at (2.1,13.1){{\small $\mathbb{S}'_1$}}; 
			\node at (1.15,10.0){{\small $\mathbb{S}_1$}}; 
			
			\node at (1.15,8.0){{\small $\mathbb{S}_2$}}; 
			\node at (2.1,4.8){{\small $\mathbb{S}'_2$}}; 
			
			\node at (3.2,6.0){{\small $\mathbb{S}_3$}}; 
			\node at (4.,2.8){$\mathbb{S}'_3$};
			
			\node at (5.2,3.9){{\small $\mathbb{S}_4$}}; 
			\node at (6.,0.7){{\small $\mathbb{S}'_4$}};

			\node at (7.2,0.1){{\small $\mathbb{S}_g$}}; 
			\node at (8,-3.1){{\small $\mathbb{S}'_g$}};

			\node at (4.8,8.5){{\small $\mathbb{S}_{g+1}$}}; 
			\node at (6.8,6.5){{\small $\mathbb{S}_{g+2}$}}; 
			\node at (8.8,4.7){{\small $\mathbb{S}_{g+3}$}}; 
			
			\node at (8.8,2.7){{\small $\mathbb{S}_{2g-3}$}}; 
			\node at (10.8,0.7){{\small $\mathbb{S}_{2g-2}$}}; 
			\node at (12.8,-0.6){{\small $\mathbb{S}_{2g-1}$}}; 
			\node at (12.9,-3.1){{\small $\mathbb{S}_{2g+n-3}$}}; 
			
			\node at (9.2,-2.1){{\small $\mathbb{S}_0$}}; 
			\node at (11.1,-6){{\small $\mathbb{S}_{-(n-1)}$}}; 
			\node at (14.9,-5.4){{\small $\mathbb{S}_{-n}$}}; 
			\end{tikzpicture}
		$$
		\caption{Compact orientable surface $\Sigma_{g,n}$ with genus 
			$g\geq 2$ and bordered by $n\geq 1$ circles. }
	\end{figure}
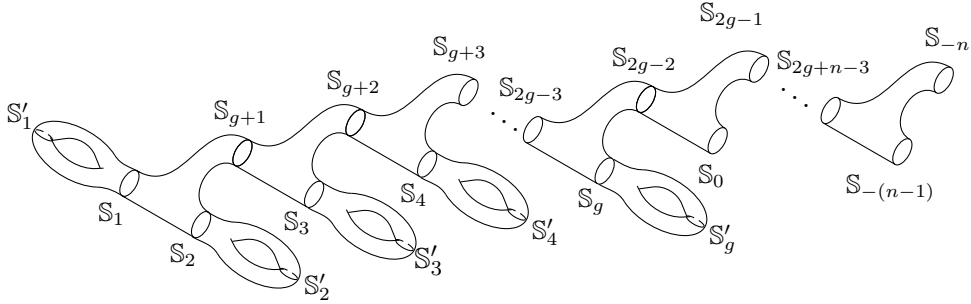
	
\begin{proof}[Proof of Theorem \ref{thm2}] 
Consider the decomposition of $\Sigma_{g,n},$ as in Figure 1, obtained by cutting the surface along the circles in the following order 
$$\mathbb{S}_1,\ldots,\mathbb{S}_g,\mathbb{S}_{ g+1},\ldots,\mathbb{S}_{2g-3+n}.$$ This decomposition consists of 
\begin{itemize}
\item[--]{the torus ${\Sigma_{1,1}^\nu}$ with boundary circle $\mathbb{S}_{\nu },$ $\nu=1,\ldots,g,$}
\item[--]{the pair of pants ${\Sigma_{0,3}^{g+1}}$ with boundaries $\mathbb{S}_1, \mathbb{S}_2, \mathbb{S}_{ g+1},$}
\item[--]{the pair of pants 
${\Sigma_{0,3}^{\nu+g}}$ with boundaries $\mathbb{S}_{g+\nu}, \mathbb{S}_{\nu+1}, \mathbb{S}_{ g+\nu-1},$ $\nu=2,\ldots,g-1,$ }
\item[--]{ the pair of pants ${\Sigma_{0,3}^{\nu+g}}$ with boundaries
$\mathbb{S}_{g+\nu}, \mathbb{S}_{ g+\nu-1}, \mathbb{S}_{g-\nu},$ $\nu=g,\ldots,g+n-3,$}
\item[--]{the pair of pants ${\Sigma_{0,3}^{2g-2+n}}$ with boundaries
$\mathbb{S}_{2g+n-3}, \mathbb{S}_{-(n-1)},$ $\mathbb{S}_{-(n-2)}.$}
\end{itemize}
Consider also the decomposition ${\Sigma_{1,1}^\nu}=Y_\nu \cup_{_{\partial Y_\nu}}{\Sigma_{0,3}^{\nu}},$ $\nu=1,\ldots,g,$ 
where $Y_\nu$ is the cylinder $\mathbb{S}'_\nu\times [-\varepsilon,+\varepsilon]$ and ${\Sigma_{0,3}^{\nu}}$ is the pair of pants with boundaries 
$\mathbb{S}'_\nu\times \{ -\varepsilon\},$ $\mathbb{S}'_\nu\times \{\varepsilon\},$  $\mathbb{S}_\nu$  for sufficiently small $\varepsilon>0.$

\noindent \textbf{Case 1 :} Consider the decomposition $\Sigma_{0,3}{\cup}_{\mathbb{S}_1}\Sigma_{0,n-1}$ of $\Sigma_{0,n}$ for $n\geq 4,$ where $\Sigma_{0,3}$ and $\Sigma_{0,n-1}$ are glued along the common boundary circle $\mathbb{S}_1.$ Then there is a short exact sequence of the chain complexes 
$$0\to C_{\ast}(\mathbb{S}_1)\rightarrow C_{\ast}(\Sigma_{0,3})\oplus C_{\ast}(\Sigma_{0,n-1})\rightarrow C_{\ast}(\Sigma_{0,n})\to 0$$ and the corresponding Mayer-Vietoris sequence $\mathcal{H}_{\ast}.$ By using the arguments stated in the proof of Theorem \ref{thm1} for the given bases $\mathbf{h}_\eta^{\Sigma_{0,n}}$ and $\mathbf{h}_\eta^{\mathbb{S}_1},$ $\eta=0,1,$ there exist bases $\mathbf{h}_\eta^{\Sigma_{0,3}}$ and $\mathbf{h}_\eta^{\Sigma_{0,n-1}}$ such that the R-torsion of $\mathcal{H}_{\ast}$ in the corresponding bases is $1$ and 
		\begin{equation}\label{eq36A}
		\mathbb{T}(\Sigma_{0,n},\{\mathbf{h}_\eta^{\Sigma_{0,n}}\}_{0}^1)
		=\mathbb{T}(\Sigma_{0,3},\{\mathbf{h}_\eta^{\Sigma_{0,3}}\}_{0}^1)\;
		\mathbb{T}(\Sigma_{0,n-1},\{\mathbf{h}_\eta^{\Sigma_{0,n-1}}\}_{0}^1)\;\mathbb{T}(\mathbb{S}_1,\{\mathbf{h}_\eta^{\mathbb{S}_1}\}_{0}^1)^{-1}.
		\end{equation}
	By  \cite[Thm. 3.5]{sozen2} and (\ref{eq36A}), we obtain
	\begin{equation}\label{eq36B}
	|\mathbb{T}(\Sigma_{0,n},\{\mathbf{h}_\eta^{\Sigma_{0,n}}\}_{0}^1)|=|\mathbb{T}(\Sigma_{0,3},\{\mathbf{h}_\eta^{\Sigma_{0,3}}\}_{0}^1)|
		|\mathbb{T}(\Sigma_{0,n-1},\{\mathbf{h}_\eta^{\Sigma_{0,n-1}}\}_{0}^1)|.
		\end{equation}
Applying (\ref{eq36B}) inductively, we get	
$$|\mathbb{T}(\Sigma_{0,n},\{\mathbf{h}_\eta^{\Sigma_{0,n}}\}_{0}^1)|=\prod_{\nu=1}^{n-2}|\mathbb{T}({\Sigma_{0,3}^\nu},\{\mathbf{h}_\eta^{{\Sigma_{0,3}^\nu}}\}_{0}^1)|.$$
	
\noindent \textbf{Case 2 :} 
For the decomposition $\Sigma_{1,1}=Y \cup_{_{\partial Y}}{\Sigma_{0,3}},$ where $Y=\mathbb{S}'\times [-\varepsilon,+\varepsilon],$ $\partial Y=\mathbb{S}'\times\{-\epsilon\}\sqcup\mathbb{S}'\times\{+\epsilon\},$ and ${\Sigma_{0,3}}$ is the pair of pants with boundaries 
		$\mathbb{S}'\times \{ -\varepsilon\},$ $\mathbb{S}'\times \{\varepsilon\},$  $\mathbb{S}$ for sufficiently small $\varepsilon>0,$ 
		we have the following short exact sequence of the chain complexes 
	\begin{equation}\label{eq37}
	0\to C_{\ast}({\Sigma_{0,3}}\cap Y)\rightarrow C_{\ast}({\Sigma_{0,3}})\oplus C_{\ast}( Y
	)\rightarrow C_{\ast}(\Sigma_{1,1})\to 0
	\end{equation}
	and the corresponding Mayer-Vietoris sequence $\mathcal{H}_{\ast}.$
	If we follow the arguments in the proof of Theorem \ref{thm1} for the given bases $\mathbf{h}^{\Sigma_{1,1}}_\eta$  and $\mathbf{h}^{\mathbb{S}'}_\eta,$ $\eta=0,1,$ then we get the bases $\mathbf{h}^{{\Sigma_{0,3}}}_\eta$ and $\mathbf{h}^{Y}_\eta$ such that the R-torsion of $\mathcal{H}_{\ast}$ in the corresponding bases equals to $1$ and the formula is valid
		\begin{equation*}
		\mathbb{T}(\Sigma_{1,1},\{\mathbf{h}_\eta^{\Sigma_{1,1}}\}_{0}^1)=\mathbb{T}({\Sigma_{0,3}},\{\mathbf{h}_\eta^{\Sigma_{0,3}}\}_{0}^1) \; \mathbb{T}(Y,\{\mathbf{h}_\eta^Y\}_{0}^1) \; 
		\mathbb{T}(\mathbb{S}',\{\mathbf{h}_\eta^{\mathbb{S}'}\}_{0}^1)^{-2}.
		\end{equation*}
	 From  \cite[Thm. 3.5]{sozen2} and Corollary \ref{cor2} it follows
		\begin{equation*}
		|\mathbb{T}({\Sigma_{1,1}},\{\mathbf{h}_\eta^{\Sigma_{1,1}}\}_{0}^1)|=|\mathbb{T}({\Sigma_{0,3}},\{\mathbf{h}_\eta^{\Sigma_{0,3}}\}_{0}^1)|.
		\end{equation*}
		
\noindent \textbf{Case 3 :}	
	Let $\Sigma_{g-1,1}{\cup}_{{\mathbb{S}_1}}{\Sigma_{1,1}}$ be the decomposition of $\Sigma_{g,0}$, $g\geq 2,$ where ${\Sigma_{1,1}}$ and $\Sigma_{g-1,1}$ are glued along the common boundary circle $\mathbb{S}_1.$ 
		By the decomposition, there exists the natural short exact sequence 
		$$0\to C_{\ast}(\mathbb{S}_1)\rightarrow C_{\ast}(\Sigma_{g-1,1})\oplus C_{\ast}({\Sigma_{1,1}})\rightarrow C_{\ast}(\Sigma_{g,0})\to 0$$ 
		and its corresponding Mayer-Vietoris sequence 
		\begin{eqnarray*}
			&&\mathcal{H}_{\ast}:0\rightarrow H_2(\Sigma_{g,0})\overset{\delta_2}{\rightarrow}H_1(\mathbb{S}_1)\overset{f}{\rightarrow}
			H_1(\Sigma_{g-1,1})\oplus H_1({\Sigma_{1,1}})\overset{g}{\rightarrow}H_1(\Sigma_{g,0})\\
			& & \quad \quad \quad \overset{\delta_1}{\rightarrow}H_0(\mathbb{S}_1) 
			\overset{i}{\rightarrow}H_0(\Sigma_{g-1,1})\oplus H_0({\Sigma_{1,1}})\overset{j}{\rightarrow}H_0(\Sigma_{g,0})\overset{k}{\rightarrow}0.
		\end{eqnarray*}
		For the given bases $\mathbf{h}_\nu^{\Sigma_{g,0}}$ and $\mathbf{h}_\eta^{\mathbb{S}_1}$ with the condition 
		$\delta_2(\mathbf{h}_2^{\Sigma_{g,0}})=\mathbf{h}_1^{\mathbb{S}_1},$ $\nu=0,1,2,$ $\eta=0,1$, if we use the arguments stated in the proof of Theorem \ref{thm1}, then we obtain the bases $\mathbf{h}_\eta^{\Sigma_{g-1,1}}$ and $\mathbf{h}_\eta^{{\Sigma_{1,1}}}$ such that the R-torsion of $\mathcal{H}_{\ast}$ in the corresponding bases becomes $1$ and the following formula holds\\
		$$\mathbb{T}(\Sigma_{g,0},\{\mathbf{h}_\nu^{\Sigma_{g,0}}\}_{0}^2)=\mathbb{T}(\Sigma_{g-1,1},\{\mathbf{h}_\eta^{\Sigma_{g-1,1}}\}_{0}^1)\;
		\mathbb{T}({\Sigma_{1,1}},\{\mathbf{h}_\eta^{{\Sigma_{1,1}}}\}_{0}^1) \;\mathbb{T}(\mathbb{S}_1,\{\mathbf{h}_\eta^{\mathbb{S}_1}\}_{0}^1)^{-1}.$$
		By \cite[Thm. 3.5]{sozen2}, we obtain
$$|\mathbb{T}(\Sigma_{g,0},\{\mathbf{h}_\nu^{\Sigma_{g,0}}\}_{0}^2)|=|\mathbb{T}(\Sigma_{g-1,1},\{\mathbf{h}_\eta^{\Sigma_{g-1,1}}\}_{0}^1)|\;
		|\mathbb{T}({\Sigma_{1,1}},\{\mathbf{h}_\eta^{{\Sigma_{1,1}}}\}_{0}^1)|.$$
\textbf{Case 4 :}	
		Consider the decomposition $\Sigma_{g,n}=\Sigma_{g-1,n+1}\cup_{\mathbb{S}_1}{\Sigma_{1,1}}$ for $g\geq 2,$ $n\geq 1,$ where ${\Sigma_{1,1}}$ and $\Sigma_{g-1,n+1}$ are glued along the common boundary circle $\mathbb{S}_1.$ Then there is the natural short exact sequence of the chain complexes
		\begin{equation}\label{eq39}
		0\to C_{\ast}(\mathbb{S}_1)\rightarrow C_{\ast}(\Sigma_{g-1,n+1})\oplus C_{\ast}({\Sigma_{1,1}})\rightarrow C_{\ast}(\Sigma_{g,n})\to 0,
		\end{equation} 
		and the corresponding Mayer-Vietoris sequence $\mathcal{H}_{\ast}.$
	Using the arguments in the proof of Theorem \ref{thm1} for the given bases $\mathbf{h}_\eta^{\Sigma_{g,n}}$ and $\mathbf{h}_\eta^{\mathbb{S}_1},$ $\eta=0,1,$  we get the bases $\mathbf{h}_\eta^{\Sigma_{g-1,n+1}}$ and $\mathbf{h}_\eta^{{\Sigma_{1,1}}}$ such that the R-torsion of $\mathcal{H}_{\ast}$ in the corresponding bases is $1$ and 
	\begin{eqnarray*}
	 \mathbb{T}(\Sigma_{g,n},\{\mathbf{h}_\eta^{\Sigma_{g,n}}\}_{0}^1)=\mathbb{T}(\Sigma_{g-1,n+1},\{\mathbf{h}_\eta^{\Sigma_{g-1,n+1}}\}_{0}^1)\;
\mathbb{T}({\Sigma_{1,1}},\{\mathbf{h}_\eta^{{\Sigma_{1,1}}}\}_{0}^1)\;\mathbb{T}(\mathbb{S}_1,\{\mathbf{h}_\eta^{\mathbb{S}_1}\}_{0}^1)^{-1}.
	\end{eqnarray*}
	 By \cite[Thm. 3.5]{sozen2}, the R-torsion of $\Sigma_{g,n}$ satisfies the following formula
$$|\mathbb{T}(\Sigma_{g,n},\{\mathbf{h}_\eta^{\Sigma_{g,n}}\}_{0}^1)|=|\mathbb{T}(\Sigma_{g-1,n+1},\{\mathbf{h}_\eta^{\Sigma_{g-1,n+1}}\}_{0}^1)|\;	|\mathbb{T}({\Sigma_{1,1}},\{\mathbf{h}_\eta^{{\Sigma_{1,1}}}\}_{0}^1)|.$$

Applying the Cases 1-4 inductively, we have the following R-torsion formula for  the compact orientable surfaces $\Sigma_{g,n},$ $g\geq 2,$ $n\geq 0$
		$$|\mathbb{T}(\Sigma_{g,n},\{\mathbf{h}_\eta^{\Sigma_{g,n}}\}_{0}^1)|=\prod_{\nu=1}^{2g-2+n}|\mathbb{T}({\Sigma_{0,3}^\nu},\{\mathbf{h}_\eta^{{\Sigma_{0,3}^\nu}}\}_{0}^1)|.$$
\end{proof}
\section{Applications}
 \subsection{Compact 3-manifolds with boundary}  
		Let $N$ be a smooth compact orientable $3$-manifold whose boundary consists of finitely many closed orientable surfaces
		$\partial N=\Sigma_{g_{_1},0}\sqcup\Sigma_{g_{_2},0}\sqcup\cdots\sqcup\Sigma_{g_{_m},0}.$
		Let $d(N)$ be the double of $N.$ Consider the natural short exact sequence of the chain complexes 
		\begin{equation}\label{eq40}
		0\to C_{\ast}(\partial N)\rightarrow C_{\ast}(N)\oplus
		C_{\ast}(N)\rightarrow C_{\ast}(d(N))\to 0
		\end{equation}
	and the corresponding Mayer-Vietoris sequence $\mathcal{H}_{\ast}.$ For the given bases $\mathbf{h}_{\mu}^N,$  $\mathbf{h}_\nu^{\partial N},$ and $\mathbf{h}_{\mu}^{d(N)},$ $\nu=0,1,2,$ $\mu=0,1,2,3,$ we will denote the corresponding basis of 
	$\mathcal{H}_{\ast}$ by $\mathbf{h}_n,$ $n=0,\ldots,11.$ 
	As the bases in the sequence (\ref{eq40}) are 
	compatible, \cite[Thm. 3.2]{milnor}  yields
	\begin{equation}\label{eq42}
	\mathbb{T}(N,\{\mathbf{h}_{\mu}^N\}_{0}^3)^2=\mathbb{T}(\partial N,\{\mathbf{h}_\nu^{\partial N}\}_{0}^2)\; \mathbb{T}(d(N),\{\mathbf{h}_{\mu}^{d(N)}\}_{0}^3) \;\mathbb{T}(\mathcal{H}_{\ast},\{\mathbf{h}_n\}_{0}^{11}).
	\end{equation} 
By \cite[Thm. 3.5]{sozen2} and (\ref{eq42}), we have
	\begin{equation}\label{eq44}
	|\mathbb{T}(N,\{\mathbf{h}_{\mu}^N\}_{0}^3)|=\sqrt{|\mathbb{T}(\partial N,\{\mathbf{h}_\nu^{\partial N}\}_{0}^2)||\mathbb{T}(\mathcal{H}_{\ast},\{\mathbf{h}_n\}_{0}^{11})|}.
	\end{equation}
\noindent 	Note that $\partial N$ is equal to $\Sigma_{g_{_1},0}\sqcup\Sigma_{g_{_2},0}\sqcup\cdots\sqcup\Sigma_{g_{_m},0}.$ By \cite[Lem. 1.4]{sozen2}, we get
	\begin{equation}\label{eq45}
	|\mathbb{T}(\partial N,\{\mathbf{h}_\nu^{\partial N}\}_{0}^2)|= 
	\prod_{i=1}^{m}|\mathbb{T}(\Sigma_{g_{_i},0},\{\mathbf{h}_\nu^{\Sigma_{g_{_n},0}}\}_{0}^2)|.
	\end{equation}	
\noindent 	For each $i=1,\ldots,m,$ consider the given basis $\mathbf{h}_\nu^{\Sigma_{g_{_i},0}}$ for $\nu=0,1,2$ and pants-decompositions $\{{\Sigma_{0,3}^{j,i}}\}_{j=1}^{2g_{_i}-2}$ of $\Sigma_{g_{_i},0.}$ By using Theorem \ref{thm2}, we obtain the basis $\mathbf{h}_{\eta}^{{{\Sigma_{0,3}^{j,i}}}},$ ${\eta}=0,1,$ $j=1,\ldots,2g_{_i}-2$ such that
	\begin{equation}\label{eq46}
	|\mathbb{T}(\partial N,\{\mathbf{h}_\nu^{\partial N}\}_{0}^2)|= 
	\prod_{i=1}^{m}\prod_{j=1}^{2{g_i}-2}|\mathbb{T}({{\Sigma_{0,3}^{j,i}}},\{\mathbf{h}_{\eta}^{{\Sigma_{0,3}^{j,i}}}\}_0^1)|.
	\end{equation}
	\noindent Equations (\ref{eq45}) and (\ref{eq46}) yield the following formula
	\begin{equation*}
		|\mathbb{T}(N,\{\mathbf{h}_{\mu}^N\}_{0}^3)|=\sqrt{\prod_{i=1}^{m}\prod_{j=1}^{2{g_i}-2}|\mathbb{T}({{\Sigma_{0,3}^{j,i}}},\{\mathbf{h}_{\eta}^{{\Sigma_{0,3}^{j,i}}}\}_0^1)| \;| \mathbb{T}(\mathcal{H}_{\ast},\{\mathbf{h}_i\}_{0}^{11})|}.
		\end{equation*}
	\begin{corollary}
	\noindent Let $N$ be the handlebody of genus $g\geq 2.$ Clearly, the boundary $\partial N$ of $N$ is an orientable closed surface $\Sigma_{g,0}$ and the double $d(N)$ of $N$ is equal to $\underset{g}{\#}( {\mathbb{S}}\times {\mathbb{S}^2}).$ Then, we have the short exact sequence
		\begin{equation}\label{eq47}
		0\to C_{\ast}(\Sigma_{g,0})\rightarrow C_{\ast}(N)\oplus
		C_{\ast}(N)\rightarrow C_{\ast}(d(N))\to 0
		\end{equation}
		and the corresponding Mayer-Vietoris sequence 
		$\mathcal{H}_{\ast}.$ For the given bases $\mathbf{h}_{\mu}^{d(N)}$ and $\mathbf{h}_{\mu}^{N}$ $\mu=0,\ldots,3,$ following the arguments above, there exists a basis $\mathbf{h}_i^{\Sigma_{g,0}}$ $i=0,1,2$ such that in the corresponding bases the R-torsion of $\mathcal{H}_{\ast}$ is $1$ and from \cite[Thm. 3.5]{sozen2} it follows 
		\begin{equation*}
		|\mathbb{T}(N,\{\mathbf{h}_{\mu}^N\}_{0}^3)|=\sqrt{|\mathbb{T}(\Sigma_{g,0},\{\mathbf{h}_i^{\Sigma_{g,0}}\}_{0}^2)|}.
		\end{equation*}
	Let us consider the pants-decomposition 
		$\{{\Sigma_{0,3}^{j}}\}_{j=1}^{2g-2}$ of $\Sigma_{g,0}.$ By Theorem \ref{thm2}, there exists the basis $\mathbf{h}_\eta^{{{\Sigma_{0,3}^{j}}}}$
		for each $j=1,\ldots,2g-2,$ $\eta=0,1$ and the formula holds
	\begin{equation*}
		|\mathbb{T}(N,\{\mathbf{h}_\mu^N\}_{0}^3)|= \sqrt{
			\prod_{j=1}^{2g-2}|\mathbb{T}({{\Sigma_{0,3}^{j}}},\{\mathbf{h}_\eta^{{\Sigma_{0,3}^{j}}}\}_0^1)|}.
		\end{equation*}
\end{corollary}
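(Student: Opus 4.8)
The plan is to run the general construction of the previous subsection (equations (\ref{eq40})--(\ref{eq46})) in the special case $\partial N=\Sigma_{g,0}$, taking advantage of the fact that here the homology basis of the boundary surface is \emph{not} prescribed but is to be chosen: I would pick $\mathbf{h}_{i}^{\Sigma_{g,0}}$ exactly as the basis of $\Sigma_{2,0}$ was picked in the proof of Theorem \ref{thm1}, so that the Reidemeister torsion of the Mayer--Vietoris sequence $\mathcal{H}_{\ast}$ of (\ref{eq47}) comes out equal to $1$. The roles here parallel those in Theorem \ref{thm1}, with $\Sigma_{g,0}$ (free homology basis) playing the role of $\Sigma_{2,0}$ and the double $d(N)=\#_{g}(\mathbb{S}^{1}\times\mathbb{S}^{2})$ --- a closed oriented $3$-manifold, hence of absolute R-torsion $1$ by \cite[Thm. 3.5]{sozen2}, just like the circles $\mathbb{S}_{j}$ there --- playing the role of the extra given pieces. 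Granting $\mathbb{T}(\mathcal{H}_{\ast})=1$, Milnor's multiplicativity gives $|\mathbb{T}(N)|=\sqrt{|\mathbb{T}(\Sigma_{g,0})|}$, and Theorem \ref{thm2} then rewrites the right-hand side as a product over a pants decomposition.

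Concretely, I would first write down $\mathcal{H}_{\ast}$, a chain complex $C_{0}(\mathcal{H}_{\ast}),\dots,C_{11}(\mathcal{H}_{\ast})$ running through the homology groups of $d(N)$, $\Sigma_{g,0}$ and $N\oplus N$ in degrees $0,\dots,3$, and record the short exact sequences (\ref{eq3}), (\ref{eq4}) and the splitting (\ref{eq5A}) for it, which is legitimate since exactness of $\mathcal{H}_{\ast}$ gives $Z_{p}(\mathcal{H}_{\ast})=B_{p}(\mathcal{H}_{\ast})$. The bases $\mathbf{h}_{\mu}^{N}$ and $\mathbf{h}_{\mu}^{d(N)}$ are given, whereas $\mathbf{h}_{i}^{\Sigma_{g,0}}$, $i=0,1,2$, is free. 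Proceeding degree by degree upward from $C_{0}(\mathcal{H}_{\ast})=H_{0}(d(N))$, exactly as in (\ref{eq9})--(\ref{eq33}), I would: at each degree whose space is one of $H_{k}(N)\oplus H_{k}(N)$ or $H_{k}(d(N))$, where the basis $\mathbf{h}_{p}$ is fixed, rescale the chosen basis $\mathbf{b}_{p}$ of the relevant image subspace by an inverse determinant so that the transition matrix from $\mathbf{h}_{p}$ to $\mathbf{h}'_{p}=\mathbf{b}_{p}\sqcup s_{p}(\mathbf{b}_{p-1})$ has determinant $1$; and at the (three nontrivial) degrees whose space is one of $H_{0}(\Sigma_{g,0})$, $H_{1}(\Sigma_{g,0})$, $H_{2}(\Sigma_{g,0})$, simply declare $\mathbf{h}_{i}^{\Sigma_{g,0}}$ to be $\mathbf{h}'_{p}$. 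Since by \cite{milnor} the torsion of $\mathcal{H}_{\ast}$ does not depend on the auxiliary bases $\mathbf{b}_{p}$ or the sections, this yields $[\mathbf{h}'_{p},\mathbf{h}_{p}]=1$ at every degree, hence $\mathbb{T}(\mathcal{H}_{\ast},\{\mathbf{h}_{n}\}_{0}^{11},\{0\}_{0}^{11})=1$.

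With this choice in hand, compatibility of the bases in (\ref{eq47}) together with \cite[Thm. 3.2]{milnor} give, as in (\ref{eq42}),
$$\mathbb{T}(N,\{\mathbf{h}_{\mu}^{N}\}_{0}^{3})^{2}=\mathbb{T}(\Sigma_{g,0},\{\mathbf{h}_{i}^{\Sigma_{g,0}}\}_{0}^{2})\;\mathbb{T}(d(N),\{\mathbf{h}_{\mu}^{d(N)}\}_{0}^{3})\;\mathbb{T}(\mathcal{H}_{\ast},\{\mathbf{h}_{n}\}_{0}^{11},\{0\}_{0}^{11}),$$
whence, $d(N)$ being closed and oriented, \cite[Thm. 3.5]{sozen2} (exactly as in the passage from (\ref{eq42}) to (\ref{eq44})) gives $|\mathbb{T}(N,\{\mathbf{h}_{\mu}^{N}\}_{0}^{3})|=\sqrt{|\mathbb{T}(\Sigma_{g,0},\{\mathbf{h}_{i}^{\Sigma_{g,0}}\}_{0}^{2})|\,|\mathbb{T}(\mathcal{H}_{\ast})|}$, and the factor $|\mathbb{T}(\mathcal{H}_{\ast})|=1$ just established reduces this to $|\mathbb{T}(N,\{\mathbf{h}_{\mu}^{N}\}_{0}^{3})|=\sqrt{|\mathbb{T}(\Sigma_{g,0},\{\mathbf{h}_{i}^{\Sigma_{g,0}}\}_{0}^{2})|}$. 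Finally I would fix the pants decomposition $\{\Sigma_{0,3}^{j}\}_{j=1}^{2g-2}$ of $\Sigma_{g,0}$ and apply Theorem \ref{thm2} with $n=0$ to the basis $\mathbf{h}_{i}^{\Sigma_{g,0}}$ just constructed, obtaining bases $\mathbf{h}_{\eta}^{\Sigma_{0,3}^{j}}$, $\eta=0,1$, with $|\mathbb{T}(\Sigma_{g,0},\{\mathbf{h}_{i}^{\Sigma_{g,0}}\}_{0}^{2})|=\prod_{j=1}^{2g-2}|\mathbb{T}(\Sigma_{0,3}^{j},\{\mathbf{h}_{\eta}^{\Sigma_{0,3}^{j}}\}_{0}^{1})|$; substituting into the previous identity gives the asserted formula.

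I expect the only real work to be the basis-selection step of the second paragraph. One must check that the degree-by-degree rescaling genuinely goes through here: that at each degree the images in $\mathcal{H}_{\ast}$ have the complementary dimensions required for the splitting (\ref{eq5A}), that the determinants used to rescale are nonzero, and that the three separate prescriptions $\mathbf{h}_{i}^{\Sigma_{g,0}}=\mathbf{h}'_{p}$ draw only on data --- images of the given homology bases under the Mayer--Vietoris maps, together with freely chosen sections --- that do not interfere across the three degrees. These verifications are the exact analogues of those performed in the proof of Theorem \ref{thm1}; a secondary point worth making explicit is the normalization under which $|\mathbb{T}(d(N))|=1$, i.e. that \cite[Thm. 3.5]{sozen2} is being applied to the given homology basis of $d(N)$.
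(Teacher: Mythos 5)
Your proposal is correct and follows essentially the same route as the paper: specialize the Mayer--Vietoris argument of Section 4.1 to $\partial N=\Sigma_{g,0}$, use the freedom in choosing $\mathbf{h}_i^{\Sigma_{g,0}}$ (exactly as in the proof of Theorem \ref{thm1}) to force $\mathbb{T}(\mathcal{H}_{\ast})=1$, invoke \cite[Thm. 3.2]{milnor} and \cite[Thm. 3.5]{sozen2} to get $|\mathbb{T}(N)|=\sqrt{|\mathbb{T}(\Sigma_{g,0})|}$, and finish with Theorem \ref{thm2}. Your explicit remark that the boundary basis is free here (unlike in the general formula of Section 4.1, where the factor $|\mathbb{T}(\mathcal{H}_{\ast})|$ must be retained) is precisely the point the paper relies on.
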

	
	\subsection{Product of $2d$-manifolds and compact $3$-manifolds with boundary $\Sigma_{g,0}$ }
		Let $M$ be a smooth closed orientable $2d$-manifold ($d\geq1$) and $N$  an  smooth compact orientable $3$-manifold whose boundary consists of closed 
		orientable surface $\Sigma_{g,0}$ $(g\geq2).$ 
		Let $X$ be the product manifold $M\times N$ and $d(X)$ denote the double of $X.$ Clearly, the boundary of $X$ is $M\times\Sigma_{g,0}.$ Consider the natural short exact sequence of the chain complexes
		\begin{equation}\label{eq49}
		0\to C_{\ast}(M\times\Sigma_{g,0})\rightarrow C_{\ast}(X)\oplus
		C_{\ast}(X)\rightarrow C_{\ast}(d(X))\to 0
		\end{equation}
		\noindent and the Mayer-Vietoris sequence $\mathcal{H}_{\ast}$ corresponding to (\ref{eq49}). Let $\mathbf{h}_i^X,$ $\mathbf{h}_i^{d(X)},$ $\mathbf{h}_k^{M},$ and $\mathbf{h}_\ell^{\Sigma_{g,0}}$ be given bases for $i=0,\cdots,2d+3,$ $k=0,\ldots,2d,$ $\ell=0,1,2.$ Let
		$\mathbf{h}_\nu^{M\times\Sigma_{g,0}}$ denote the basis $\underset{i}{\oplus}\mathbf{h}_i^{M}\otimes\mathbf{h}_{\nu-i}^{\Sigma_{g,0}}$ of $H_\nu(M\times\Sigma_{g,0}),$ $\nu=0,\ldots,2d+2.$ For $n=0,\ldots,6d+11,$ let $\mathbf{h}_n$ be the corresponding basis of $\mathcal{H}_{\ast}.$ 
		Let $\{{\Sigma_{0,3}^{j}}\}_{j=1}^{2g-2}$ be the pants-decomposition of $\Sigma_{g,0}.$ Since the bases in the sequence (\ref{eq49}) are compatible and \cite[Lem. 1.4]{sozen2}, we obtain
	\begin{eqnarray}\label{eq50}
	&& \nonumber \mathbb{T}(X,\{\mathbf{h}_i^X \}_{0}^{2d+3})^2=\mathbb{T}(M\times\Sigma_{g,0},\{\mathbf{h}_\nu^{M\times\Sigma_{g,0}}\}_{0}^{2d+2})\; \mathbb{T}(d(X),\{\mathbf{h}_i^{d(X)}\}_{0}^{2d+3}) \\
	&& \quad \quad \quad \quad \quad \quad \quad \quad \;\; \; \times\; \mathbb{T}(\mathcal{H}_{\ast},\{\mathbf{h}_n\}_{0}^{6d+11}).
	\end{eqnarray} 
\noindent From \cite[Thm. 3.5]{sozen2} and (\ref{eq50}) it follows that
	\begin{equation}\label{eq52}
	|\mathbb{T}(X,\{\mathbf{h}_i^X\}_{0}^{2d+3})|=|\mathbb{T}(M\times\Sigma_{g,0},\{\mathbf{h}_\nu^{M\times\Sigma_{g,0}} \}_{0}^{2d+2})|^{1/2} \;|\mathbb{T}(\mathcal{H}_{\ast},\{\mathbf{h}_n\}_{0}^{6d+11})|^{1/2}.
	\end{equation}
	\noindent By  \cite[Thm. 3.1]{ozelszn}, the R-torsion of $M\times\Sigma_{g,0}$ satisfies the equality
	\begin{equation}\label{eq53}
	|\mathbb{T}(M\times\Sigma_{g,0},\{\mathbf{h}_\nu^{M\times\Sigma_{g,0}}\}_{0}^{2d+2})|=|{\mathbb{T}(M,\{\mathbf{h}_k^M\}_{0}^{2d
		})}|^{\chi(\Sigma_{g,0})} \; |{\mathbb{T}(\Sigma_{g,0},\{\mathbf{h}_\ell^{\Sigma_{g,0}}\}_{0}^{2})}|^{\chi(M)}.
	\end{equation}
	Here, $\chi$ is the Euler characteristic. 
	Then equations (\ref{eq52}) and (\ref{eq53}) yield	
		\begin{eqnarray}\label{eq54}
	&& \nonumber|\mathbb{T}(X,\{\mathbf{h}_i^X\}_{0}^{2d+3})|=|{\mathbb{T}(M,\{\mathbf{h}_k^M\}_{0}^{2d
		})}|^{\chi(\Sigma_{g,0})/2} \;  |{\mathbb{T}(\Sigma_{g,0},\{\mathbf{h}_\ell^{\Sigma_{g,0}}\}_{0}^{2})}|^{\chi(M)/2} \\
	&&\quad \quad \quad  \quad \quad \quad  \quad \quad  \quad \times \;	 {|\mathbb{T}(\mathcal{H}_{\ast},\{\mathbf{h}_n\}_{0}^{6d+11})|}^{1/2}.
	\end{eqnarray}	
	\noindent Since $\{{\Sigma_{0,3}^{j}}\}_{j=1}^{2g-2}$ is the pants-decomposition of $\Sigma_{g,0}$ as in Theorem \ref{thm2}, there exists a basis $\mathbf{h}_\eta^{\Sigma_{0,3}^j}$ of $H_\eta(\Sigma_{0,3}^j),$ $j=1,\ldots,2g-2,$ $\eta=0,1$ so that 
	\begin{eqnarray} \label{eq55}
	|\mathbb{T}(\Sigma_{g,0},\{\mathbf{h}_\ell^{\Sigma_{g,0}}\}_{0}^2)|=\prod_{j=1}^{2g-2}|\mathbb{T}(\Sigma_{0,3}^j,\{\mathbf{h}_\eta^{\Sigma_{0,3}^j}\}_{0}^1)|.
	\end{eqnarray}
\noindent Equations (\ref{eq54}) and (\ref{eq55}) yield
	\begin{eqnarray*}
	  &&	|\mathbb{T}(X,\{\mathbf{h}_i^X\}_{0}^{2d+3})|=\prod_{j=1}^{2g-2}{|\mathbb{T}(\Sigma_{0,3}^j,\{\mathbf{h}_\eta^{\Sigma_{0,3}^j}\}_{0}^1)|}^{\frac{\chi(M)}{2}}\;{|\mathbb{T}(M,\{\mathbf{h}_k^M\}_{0}^{2d
		})|}^{\frac{\chi(\Sigma_{g,0})}{2}} \\
				&& \quad \quad  \quad\quad \quad \quad  \quad \quad  \quad \times \; {|\mathbb{T}(\mathcal{H}_{\ast},\{\mathbf{h}_n\}_{0}^{6d+1})|}^{1/2}.
		\end{eqnarray*}
	
	\section*{Acknowledgment}
\noindent Theorem \ref{thm2} and Section 4 were proven in the first author's MSc thesis.

\bibliographystyle{plain}
\bibliography{rtorsion}

\end{document}